\numberwithin{equation}{section}
\numberwithin{equation}{section}
\newtheorem{theorem}{Theorem}[section]
\newtheorem{remark}[theorem]{Remark}
\newtheorem{lemma}[theorem]{Lemma}
\newtheorem{corollary}[theorem]{Corollary}
\newtheorem{notation}[theorem]{Notation}
\theoremstyle{definition}
\newtheorem{example}[theorem]{Example}
\newtheorem{assumption}{Assumptions}
\title{Drift-implicit Euler scheme for sandwiched processes \\ driven by H\"older noises}
\author{ Giulia Di Nunno$^{1,2}$\\\href{mailto:giulian@math.uio.no}{giulian@math.uio.no}
   \and  Yuliya Mishura$^3$\\\href{mailto:myus@univ.kiev.ua}{myus@univ.kiev.ua}
   \and Anton Yurchenko-Tytarenko$^1$ \\ \href{mailto:antony@math.uio.no}{antony@math.uio.no}}
\date{%
    $^1$Department of Mathematics, University of Oslo\\[2ex]%
    $^2$Department of Business and Management Science, NHH Norwegian School of Economics, Bergen\\[2ex]%
    $^3$Department of Probability Theory, Statistics and Actuarial Mathematics, Taras Shevchenko National University of Kyiv\\[2ex]
    \today
}
\begin{document}
\maketitle
\begin{abstract}
    In this paper, we analyze the drift-implicit (or backward) Euler numerical scheme for a class of stochastic differential equations with unbounded drift driven by an arbitrary $\lambda$-H\"older continuous process, $\lambda\in(0,1)$. We prove that, under some mild moment assumptions on the H\"older constant of the noise, the $L^r(\Omega;L^\infty([0,T]))$-rate of convergence is equal to $\lambda$. To exemplify, we consider numerical schemes for the generalized Cox--Ingersoll-Ross and Tsallis--Stariolo--Borland models. The results are illustrated by simulations.   
\end{abstract}

\noindent\textbf{Keywords:} sandwiched process, unbounded drift, H\"older continuous noise, numerical scheme\\
\textbf{MSC 2020:} 60H10; 60H35; 60G22; 91G30

\section*{Introduction}

We analyze the \textit{drift-implicit} (also known as \emph{backward}) Euler numerical scheme for stochastic differential equations (SDEs) of the form
\begin{equation}\label{eq: volatility introduction}
    Y(t) = Y(0) + \int_0^t b(s, Y(s))ds + Z(t), \quad t\in [0,T],
\end{equation}
where $Z$ is a general $\lambda$-H\"older continuous noise, $\lambda\in(0,1)$, and the drift $b$ is unbounded and has one of the following two properties:
\begin{itemize}
    \item[\textbf{(A)}] $b(t,y)$ has an explosive growth of the type $(y - \varphi(t))^{-\gamma}$ as $y \downarrow \varphi(t)$, where $\varphi$ is a given H\"older continuous function of the same order $\lambda$ as $Z$ and $\gamma > \frac{1}{\lambda} - 1$;
    \item[\textbf{(B)}] $b(t,y)$ has an explosive growth of the type $(y - \varphi(t))^{-\gamma}$ as $y \downarrow \varphi(t)$ \emph{and} an explosive decrease of the type $-(\psi(t) - y)^{-\gamma}$ as $y \uparrow \psi(t)$, where $\varphi$ and $\psi$ are given H\"older continuous functions  of the same order $\lambda$ as $Z$ such that $\varphi(t) < \psi(t)$, $t\in[0,T]$, and $\gamma > \frac{1}{\lambda} - 1$.
\end{itemize}

The SDEs of this type were extensively studied in \cite{DNMYT2020}. It was shown that the properties \textbf{(A)} or \textbf{(B)}, along with some relatively weak additional assumptions, ensure that the solution to \eqref{eq: volatility introduction} is bounded from below (\textit{one-sided sandwich case}) by the function $\varphi$ in the setting \textbf{(A)}, i.e.
\begin{equation}\label{eq: one-sided introduction}
    Y(t) > \varphi(t), \quad t\in[0,T],
\end{equation}
or stays between $\varphi$ and $\psi$ (\textit{two-sided sandwich case}) in the setting \textbf{(B)}, i.e.
\begin{equation}\label{eq: two-sided introduction}
    \varphi(t) < Y(t) < \psi(t), \quad t\in[0,T].
\end{equation}

We emphasize that the SDE type \eqref{eq: volatility introduction} includes and generalizes several widespread stochastic models. For example, the process given by
\[
    Y(t) = Y(0) - \int_0^t \frac{\kappa Y(s)}{1-Y^2(s)}ds + Z(t), \quad t\in[0,T],
\]
where $Z$ is $\lambda$-H\"older continuous with $\lambda > \frac{1}{2}$, fits into the setting \textbf{(B)} and can be regarded as a natural extension of the Tsallis--Stariolo--Borland (TSB) model employed in biophysics (for more details on the standard Brownian TSB model see e.g. \cite[Subsection 2.3]{Domingo2019} or \cite[Chapter 3 and Chapter 8]{BoundedNoises2013}). Another important example is
\begin{equation}\label{eq: CIR-CEV introduction 1}
    Y(t) = Y(0) + \int_0^t \left(\frac{\kappa_1}{Y^{\gamma}(s)} - \kappa_2 Y(s)\right)ds + Z(t), \quad t\in[0,T],
\end{equation}
where $Z$ is $\lambda$-H\"older continuous, $\lambda\in(0,1)$, and $\gamma > \frac{1}{\lambda} - 1$. It can be shown (see \cite[Subsection 4.2]{DNMYT2020}) that, if $\lambda > \frac{1}{2}$, stochastic process $X(t) := Y^{1+\gamma}(t)$ satisfies the SDE
\begin{equation}\label{eq: CIR/CEV introduction 2}
    X(t) = X(0) + (1+\gamma)\int_0^t\left( \kappa_1  - \kappa_2 X(s) \right) ds + \int_0^t X^\alpha(s) dZ(s), \quad t\in[0,T],
\end{equation}
where $\alpha := \frac{\gamma}{1+\gamma} \in (0,1)$ and the integral w.r.t. $Z$ exists as a pathwise limit of Riemann-Stieltjes integral sums. Equations of the type \eqref{eq: CIR/CEV introduction 2} are used in finance in the standard Brownian setting and are called \textit{Chan–-Karolyi–-Longstaff–-San\-ders} (CKLS) or \textit{constant elasticity of variance} (CEV) model (see, e.g., \cite{Andersen2006, CKLS, Cox1975NotesOO}). If $\alpha = \frac{1}{2}$, the equation \eqref{eq: CIR/CEV introduction 2} is also known as the \textit{Cox--Ingersoll--Ross} (CIR) equation, see , e.g., \cite{COX1981, COX1985-1, COX1985-2}.

In this work, we develop a numerical approximation (both pathwise and in $L^r(\Omega; L^\infty ([0,T]))$) for sandwiched processes \eqref{eq: volatility introduction} which is similar to the \textit{drift-implicit} (also known as \textit{backward}) Euler scheme constructed for the classical Cox-Ingersoll-Ross process in \cite{Alfonsi_2005, Alfonsi_2013, Dereich_Neuenkirch_Szpruch_2012} and extended to the case of the fractional Brownian motion with $H> \frac 1 2$ in \cite{Hong2020, Kubilius_Medziunas_2020, ZhYu2020}. In this drift-implicit scheme, in order to generate $\widehat Y(t_{k+1})$, one has to solve the equation of the type
\begin{equation}\label{eq: equation for BES introduction}
    \widehat Y(t_{k+1}) = \widehat Y(t_{k}) + b(t_{k+1}, \widehat Y(t_{k+1}))\Delta_N + (Z(t_{k+1}) - Z(t_k))
\end{equation}
with respect to $\widehat Y(t_{k+1})$ which is in general a more computationally heavy problem in comparison to the standard Euler-type techniques (see e.g. \cite[Section 5]{DNMYT2020}). However, this drift-implicit numerical method also has a substantial advantage: the approximation $\widehat Y$ maintains the property of being sandwiched, i.e., for all points $t_k$ of the partition 
\[
    \widehat Y(t_k) > \varphi(t_k)
\]
in the setting \textbf{(A)} and 
\[
    \varphi(t_k) < \widehat Y(t_k) < \psi(t_k)
\]
in the case \textbf{(B)}. Having this in mind, we shall say that the drift-implicit scheme is \textit{sandwich preserving}.

We note that a similar approximation scheme was studied in \cite{Kubilius_Medziunas_2020} and \cite{Hong2020, ZhYu2020} for processes of the type \eqref{eq: CIR-CEV introduction 1} driven by a fractional Brownian motion with $H>1/2$. Our work can be seen as an extension of those. However, we emphasize that our results have several elements of novelty. In particular, the paper \cite{Kubilius_Medziunas_2020} discusses only pathwise convergence and not convergence in $L^r(\Omega; L^\infty ([0,T]))$. The approach of \cite{Hong2020} and \cite{ZhYu2020} is very noise specific as both use Malliavin calculus techniques in the spirit of \cite[Proposition 3.4]{Hu2008} to estimate inverse moments of the considered process (which turns out to be crucial to control explosive growth of the drift). As a result, two limitations appear: a restrictive condition involving the time horizon $T$ (see e.g. \cite[Eq. (8) and Remark 3.1]{Hong2020}) and sensitivity to the choice of the noise, i.e. their method cannot be applied directly for drivers other then fBm with $H>1/2$. This lack of flexibility in terms of the choice of the noise is a crucial disadvantage in e.g. finance where modern empirical studies justify the use of fBm with extremely low Hurst index ($H<0.1$) \cite{Bayer_2016} or even drivers with time-varying roughness \cite{Alfi_Coccetti_Petri_Pietronero_2007}. Our approach makes use of \cite[Theorem 3.2]{DNMYT2020} based on the pathwise calculus and allows us to obtain strong convergence with no limitations on $T$ for a substantially larger class of noises. In fact, we require only H\"older continuity of the noise and some moment condition on the corresponding H\"older coefficient which is often satisfied and shared by e.g. \emph{all} H\"older continuous Gaussian processes.  

The paper is organized as follows. Section \ref{sec: assumptions} describes the setting in detail and contains some necessary statements on the properties of the sandwiched processes. In Section \ref{sec: two-sided case}, we give the convergence results in the setting \textbf{(B)} which turns out to be a bit simpler then \textbf{(A)} due to boundedness of the process. Section \ref{sec: one-sided case} extends the scheme to the setting \textbf{(A)}. In Section \ref{sec: examples}, we give some examples and simulations; in particular we show that in some cases (e.g. for the generalized TSB and CIR models) equations \eqref{eq: equation for BES introduction} can be solved explicitly which drastically improves the computational efficiency of the algorithm.

\section{Preliminaries and assumptions}\label{sec: assumptions}

Fix $T>0$ and define
\begin{equation}\label{eq: definition of the set D}
\begin{aligned}
    \mathcal D_{a_1} &:= \{(t,y)\in[0,T]\times\mathbb R_+,~y\in(\varphi(t) + a_1, \infty)\}, \quad & a_1 &> 0,
    \\
    \mathcal D_{a_1, a_2} &:= \{(t,y)\in[0,T]\times\mathbb R_+,~y\in(\varphi(t) + a_1, \psi(t) - a_2)\},  \quad & a_1,a_2 &\in \left[0, \frac{1}{2}\lVert \psi - \varphi \rVert_\infty\right).
\end{aligned}
\end{equation}
where $\varphi$, $\psi \in C([0,T])$ are such that $\varphi(t) < \psi(t)$, $t\in[0,T]$.

\noindent Throughout the paper, we will be dealing with a stochastic differential equation of the form 
\begin{equation}\label{eq: sandwiched SDE, general form}
    Y(t) = Y(0) + \int_0^t b(s, Y(s))ds + Z(t), \quad t\in [0,T].
\end{equation}
The noise $Z = \{Z(t),~t\in[0,T]\}$ is always assumed to satisfy the following conditions:
\begin{itemize}
    \item[\textbf{(Z1)}] $Z(0) = 0$ a.s.;
    \item[\textbf{(Z2)}] $Z$ has a.s. $\lambda$-H\"older continuous paths, $\lambda \in (0,1)$, i.e. there exists a positive random variable $\Lambda$ such that
        \[
            |Z(t) - Z(s)| \le \Lambda |t-s|^\lambda, \quad s,t\in[0,T].
        \]
\end{itemize}

Given the noise $Z$ satisfying \textbf{(Z1)}--\textbf{(Z2)}, the initial value $Y(0)$ and the drift $b$ satisfy \textit{one of the two} lists of assumptions given below.

\begin{assumption}\label{assum: A}{(\textit{One-sided sandwich case})} 
    There exists a $\lambda$-H\"older continuous function $\varphi$: $[0,T] \to \mathbb R$ with $\lambda$ being the same as in \textbf{(Z2)} such that
    \begin{itemize}
        \item[\textbf{(A1)}] $Y(0)$ is deterministic and $Y(0) > \varphi(0)$,
        \item[\textbf{(A2)}] $b$: $\mathcal D_{0} \to \mathbb R$ is continuous and for any $\varepsilon \in \left(0, 1\right)$
        \[
            |b(t_1,y_1) - b(t_2, y_2)| \le \frac{c_1}{\varepsilon^{p}} \left(|y_1 - y_2| + |t_1 - t_2|^\lambda \right), \quad (t_1, y_1), (t_2,y_2) \in \mathcal D_{\varepsilon},
        \]
        where $c_1 > 0$ and $p>1$ are some given constants and $\lambda$ is from \textbf{(Z2)},
        \item[\textbf{(A3)}] 
        \[
            b(t, y) \ge \frac{c_2}{(y - \varphi(t))^{\gamma}}, \quad (t,y) \in \mathcal D_{0}\setminus \mathcal D_{y_*},
        \]
        where $y_*$, $c_2 > 0$ are some given constants and $\gamma > \frac{1}{\lambda} - 1$ with $\lambda$ being from \textbf{(Z2)},
        \item[\textbf{(A4)}] the partial derivative $\frac{\partial b}{\partial y}$ with respect to the spacial variable exists, is continuous and bounded from above, i.e.
        \[
            \frac{\partial b}{\partial y}(t, y) < c_3, \quad (t,y) \in \mathcal D_{0},
        \]
        for some $c_3 > 0$.
    \end{itemize}
\end{assumption}

\begin{assumption}\label{assum: B}{(\textit{Two-sided sandwich case})} There exist $\lambda$-H\"older continuous functions $\varphi$, $\psi$: $[0,T]\to\mathbb R$, $\varphi(t) < \psi(t)$, $t\in[0,T]$, with $\lambda$ being the same as in \textbf{(Z2)} such that
    \begin{itemize}
        \item[\textbf{(B1)}] $Y(0)$ is deterministic and $\varphi(0) < Y(0) < \psi(0)$,
        \item[\textbf{(B2)}] $b$: $\mathcal D_{0,0} \to \mathbb R$ is continuous and for any $\varepsilon \in \left(0, \min\left\{1, \frac{1}{2}\lVert \psi - \varphi \rVert_\infty\right\}\right)$
        \[
            |b(t_1,y_1) - b(t_2, y_2)| \le \frac{c_1}{\varepsilon^{p}} \left(|y_1 - y_2| + |t_1 - t_2|^\lambda \right), \quad (t_1, y_1), (t_2,y_2) \in \mathcal D_{\varepsilon, \varepsilon},
        \]
        where $c_1 > 0$ and $p>1$ are some given constants and $\lambda$ is from \textbf{(Z2)},
        \item[\textbf{(B3)}] 
        \[
            b(t, y) \ge \frac{c_2}{(y - \varphi(t))^{\gamma}}, \quad (t,y) \in \mathcal D_{0,0}\setminus \mathcal D_{y_*, 0},
        \]
        \[
            b(t, y) \le -\frac{c_2}{(\psi(t) - y)^{\gamma}}, \quad (t,y) \in \mathcal D_{0,0}\setminus \mathcal D_{0, y_*},
        \]
        where $y_*$, $c_2 > 0$ are some given constants and $\gamma > \frac{1}{\lambda} - 1$ with $\lambda$ being from \textbf{(Z2)},
        \item[\textbf{(B4)}] the partial derivative $\frac{\partial b}{\partial y}$ with respect to the spacial variable exists, is continuous and bounded from above, i.e.
        \[
            \frac{\partial b}{\partial y}(t, y) < c_3, \quad (t,y) \in \mathcal D_{0,0},
        \]
        for some $c_3 > 0$.
    \end{itemize}
\end{assumption}

Both Assumptions \ref{assum: A} and \ref{assum: B} along with \textbf{(Z1)}--\textbf{(Z2)} ensure that the SDE \eqref{eq: sandwiched SDE, general form} has a unique solution. In the theorem below, we provide some relevant results related to sandwiched processes (see \cite[Theorems 2.3, 2.5, 2.6, 3.1 and 3.2]{DNMYT2020}).

\begin{theorem}\label{th: properties of sandwich} 
    Let $Z=\{Z(t),~t\in[0,T]\}$ be a stochastic process satisfying \textbf{(Z1)}--\textbf{(Z2)}.
    \begin{itemize}
        \item[1)] If the initial value $Y(0)$ and the drift $b$ satisfy assumptions \textbf{(A1)}--\textbf{(A3)}, then the SDE has a unique strong pathwise solution such that for all $t \in [0,T]$
        \begin{equation}\label{eq: one sandwich 1}
            Y(t) > \varphi(t) \quad a.s.
        \end{equation}
        Moreover, there exist deterministic constants $L_1$, $L_2$, $L_3$ and $L_4 > 0$ depending only on $Y(0)$, the shape of $b$ and $\lambda$, such that for all $t\in [0,T]$ the estimate \eqref{eq: one sandwich 1} can be refined as follows:
        \begin{equation}\label{eq: upper and lower bounds for one sandwiched volatility}
            \varphi(t) + \frac{L_1}{ ( L_2 + \Lambda )^{\frac{1}{\gamma \lambda + \lambda - 1}} } \le Y(t) \le L_3 + L_4 \Lambda \quad a.s.,
        \end{equation}
        where $\Lambda$ is from \textbf{(Z2)} and $\gamma$ is from \textbf{(A3)}. In particular, if $\Lambda$ is such that 
        \begin{equation}\label{eq: moment condition on Lambda 1}
            \mathbb E \left[\Lambda^{\frac{r}{\gamma\lambda+\lambda-1}}\right] < \infty
        \end{equation}
        for some $r>0$, then
        \[
            \mathbb E\left[ \sup_{t\in[0,T]} \frac{1}{(Y(t) - \varphi(t))^r} \right] < \infty,
        \]
        and, if 
        \begin{equation}\label{eq: moment condition on Lambda 2}
            \mathbb E \Lambda^{r} < \infty
        \end{equation}
        for some $r>0$, then
        \[
            \mathbb E \left[\sup_{t\in[0,T]} |Y(t)|^r\right] < \infty.
        \]
        
        
        \item[2)] If the initial value $Y(0)$ and the drift $b$ satisfy assumptions \textbf{(B1)}--\textbf{(B3)}, then the SDE has a unique strong pathwise solution such that for all $t \in [0,T]$
        \begin{equation}\label{eq: double sandwich 1}
            \varphi(t) < Y(t) < \psi(t) \quad a.s.
        \end{equation}
        Moreover, there exist deterministic constants $L_1$ and $L_2 > 0$ depending only on $Y(0)$, the shape of $b$ and $\lambda$, such that for all $t\in [0,T]$ the estimate \eqref{eq: double sandwich 1} can be refined as follows:
        \begin{equation}\label{eq: upper and lower bounds for sandwiched volatility, general case}
            \varphi(t) + \frac{L_1}{ ( L_2 + \Lambda )^{\frac{1}{\gamma \lambda + \lambda - 1}} } \le Y(t) \le \psi(t) - \frac{L_1}{ ( L_2 + \Lambda )^{\frac{1}{\gamma \lambda + \lambda - 1}} } \quad a.s.,
        \end{equation}
        where $\Lambda$ is from \textbf{(Z2)} and $\gamma$ is from \textbf{(B3)}. In particular, if $\Lambda$ can be chosen in such a way that
        \begin{equation}\label{eq: moment condition on Lambda 3}
            \mathbb E \left[ \Lambda^{\frac{r}{\gamma\lambda+\lambda-1}} \right] < \infty
        \end{equation}
        for some $r>0$, then
        \[
            \mathbb E\left[ \sup_{t\in[0,T]} \frac{1}{(Y(t) - \varphi(t))^r} \right] < \infty, \quad \mathbb E\left[ \sup_{t\in[0,T]} \frac{1}{(\psi(t) - Y(t))^r} \right] < \infty.
        \]
    \end{itemize}
\end{theorem}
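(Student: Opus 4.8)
Since the statement collects results from \cite{DNMYT2020}, I will only outline the strategy one would follow. I would argue pathwise: fix $\omega$ so that $Z$ is a fixed $\lambda$-H\"older function with constant $\Lambda=\Lambda(\omega)$, and absorb the H\"older seminorm of $\varphi$ (and of $\psi$) into a deterministic constant, so that the ``effective noise'' $Z(t)-(\varphi(t)-\varphi(0))$ is $\lambda$-H\"older with seminorm at most $L_2+\Lambda$. Setting $X:=Y-Z$ turns \eqref{eq: sandwiched SDE, general form} into the random ODE $X'(t)=b(t,X(t)+Z(t))$ whose right-hand side is continuous and, by \textbf{(A2)} (resp.\ \textbf{(B2)}), locally Lipschitz in space on $\mathcal D_0$ (resp.\ $\mathcal D_{0,0}$); hence there is a unique maximal solution up to the first time $\tau$ at which $(t,Y(t))$ leaves that set, i.e.\ $Y$ hits $\varphi$ from above, or $\psi$ from below, or escapes to $+\infty$. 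Everything then follows from a quantitative two-sided control of $Y$ on $[0,\tau)$, which forces $\tau>T$ and, via Gronwall on the event $\{Y-\varphi\ge\varepsilon(\omega)\}$ with Lipschitz constant $c_1\varepsilon(\omega)^{-p}$, yields pathwise uniqueness on $[0,T]$.

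The heart is the lower barrier $Y(t)-\varphi(t)\ge L_1(L_2+\Lambda)^{-1/(\gamma\lambda+\lambda-1)}$. Let $u:=Y-\varphi$, let $v:=\min_{[0,\tau)}u$ be attained at $t^*$, and put $v_0:=\tfrac12\min\{y_*,\,Y(0)-\varphi(0)\}$, a positive constant by \textbf{(A1)}. If $v\ge v_0$ the bound holds for a suitable $L_1$, since $L_2+\Lambda$ is bounded below. If $v<v_0$, then $2v<y_*$ and $2v<Y(0)-\varphi(0)$, so there is a last time $t_2<t^*$ with $u(t_2)=2v$ and $u(s)<2v\le y_*$ on $(t_2,t^*]$; on that interval \textbf{(A3)} (resp.\ the first line of \textbf{(B3)}) gives $b(s,Y(s))\ge c_2 u(s)^{-\gamma}\ge c_2(2v)^{-\gamma}$. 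Integrating \eqref{eq: sandwiched SDE, general form} over $[t_2,t^*]$ and using the H\"older bound on the effective noise with $\delta:=t^*-t_2$ yields
\[
    v \ \ge\ 2v + c_2(2v)^{-\gamma}\delta - (L_2+\Lambda)\delta^{\lambda},
\]
which splits into $(L_2+\Lambda)\delta^\lambda\ge v$ and $(L_2+\Lambda)\delta^\lambda\ge c_2(2v)^{-\gamma}\delta$, i.e.\ $\delta\ge\big(v/(L_2+\Lambda)\big)^{1/\lambda}$ and $\delta\le\big(c_2^{-1}(L_2+\Lambda)(2v)^\gamma\big)^{1/(1-\lambda)}$. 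Eliminating $\delta$ and collecting powers of $v$ gives $v^{-(\gamma\lambda+\lambda-1)/(\lambda(1-\lambda))}\lesssim(L_2+\Lambda)^{1/(\lambda(1-\lambda))}$; since $\gamma>\tfrac1\lambda-1$ the exponent of $v$ on the left is a \emph{positive} power of $v^{-1}$, so this rearranges exactly to the claimed bound, and in particular $\liminf_{t\uparrow\tau}u(t)>0$.

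For the upper control: in the two-sided case the same argument run at the boundary $\psi$ (using the second line of \textbf{(B3)}) gives the symmetric estimate $\psi(t)-Y(t)\ge L_1(L_2+\Lambda)^{-1/(\gamma\lambda+\lambda-1)}$, which is \eqref{eq: upper and lower bounds for sandwiched volatility, general case}. In the one-sided case there is no upper barrier, so instead I would bound the growth: by \textbf{(A4)}, integrating $\partial_y b<c_3$ from the compact curve $t\mapsto(t,\varphi(t)+y_*)$, on which $b$ is bounded by continuity, gives $b(t,y)\le K+c_3 y$ on $\mathcal D_{y_*}$; a last-upcrossing argument at the level $M:=\max_{[0,T]}\varphi+y_*+1$ (above which one is in $\mathcal D_{y_*}$) combined with Gronwall — splitting $[0,T]$ into subintervals of length $<1/c_3$ if $c_3T\ge1$ — yields $\sup_{[0,T]}Y\le L_3+L_4\Lambda$. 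Together with the lower barrier this shows $\tau>T$ and gives \eqref{eq: upper and lower bounds for one sandwiched volatility}.

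The moment bounds are then immediate. From \eqref{eq: upper and lower bounds for one sandwiched volatility},
\[
    \sup_{t\in[0,T]}\frac{1}{(Y(t)-\varphi(t))^{r}}\le\frac{(L_2+\Lambda)^{r/(\gamma\lambda+\lambda-1)}}{L_1^{r}},\qquad \sup_{t\in[0,T]}|Y(t)|^{r}\le C(1+\Lambda^{r}),
\]
using $|Y(t)|\le|\min_{[0,T]}\varphi|+L_3+L_4\Lambda$ and $(a+b)^q\le 2^q(a^q+b^q)$; taking expectations and invoking \eqref{eq: moment condition on Lambda 1}, \eqref{eq: moment condition on Lambda 2} respectively gives the stated finiteness, and similarly for the two bounds in part 2) under \eqref{eq: moment condition on Lambda 3}. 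The only genuine obstacle in the whole scheme is extracting the sharp exponent $\tfrac1{\gamma\lambda+\lambda-1}$ in the lower barrier — balancing the drift push $\sim v^{-\gamma}\delta$ against the noise excursion $\sim(L_2+\Lambda)\delta^\lambda$ and the drop $\sim v$; the remaining steps are routine comparison and Gronwall arguments.
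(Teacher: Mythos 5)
The paper does not prove this theorem itself; it is quoted from the reference [DNMYT2020], and the closest thing to a proof in this paper is the discrete analogue in Lemma \ref{lemma: bounds for discretized process} (and its one-sided counterpart Lemma \ref{lemma: bounds for discretized process one sided}). Your outline follows the same route as those discrete proofs and the cited source: pathwise argument, a ``last crossing'' time below a threshold level, integration of the drift lower bound from \textbf{(A3)}/\textbf{(B3)} against the H\"older excursion of the effective noise $Z-(\varphi-\varphi(0))$, and optimization over the crossing length $\delta$. Your algebra is correct and the balance $v^{1-\lambda-\gamma\lambda}\lesssim L_2+\Lambda$ does rearrange (using $\gamma\lambda+\lambda-1>0$) to the stated exponent $1/(\gamma\lambda+\lambda-1)$; it is exactly what the function $F_\varepsilon$ minimization in Lemma \ref{lemma: bounds for discretized process} encodes, packaged a bit differently (you run it at the minimum $v$ of $Y-\varphi$, the paper fixes the threshold $\varepsilon$ a priori and shows the process cannot drop below $\varepsilon/2$; the latter has the small advantage of not needing the minimum to be attained on the half-open interval $[0,\tau)$, which you should really argue on closed $[0,\sigma]$ for $\sigma<\tau$ and then pass to the limit).

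There is one genuine mismatch with the hypotheses: for the upper bound $Y\le L_3+L_4\Lambda$ in the one-sided case you invoke \textbf{(A4)} ($\partial_y b<c_3$) to derive the linear growth $b(t,y)\le K+c_3y$. But Theorem \ref{th: properties of sandwich}, part 1), is stated under \textbf{(A1)}--\textbf{(A3)} only --- \textbf{(A4)} is not assumed. The paper's own discrete counterpart (Lemma \ref{lemma: bounds for discretized process one sided}) instead extracts the linear growth from \textbf{(A2)}: fixing $\varepsilon=Y(0)-\varphi(0)\wedge 1$ and comparing with $b(t,\varphi(t)+\varepsilon)$ gives $|b(t,y)|\le C+\frac{c_1}{\varepsilon^p}|y|$ on $\mathcal D_\varepsilon$, which feeds into Gronwall without any appeal to $\partial_y b$. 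Swap your \textbf{(A4)}-based step for this \textbf{(A2)}-based linear growth bound and the proof closes under the stated hypotheses; the rest (moment bounds via $(L_2+\Lambda)^{r/(\gamma\lambda+\lambda-1)}$ and $C(1+\Lambda^r)$) is fine.
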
 

\begin{remark}\label{rem: pathwise}
    Properties \eqref{eq: one sandwich 1}--\eqref{eq: upper and lower bounds for one sandwiched volatility} and \eqref{eq: double sandwich 1}--\eqref{eq: upper and lower bounds for sandwiched volatility, general case} hold on each $\omega \in \Omega$ such that $Z(\omega; t)$, $t\in[0,T]$, is H\"older continuous and we always consider only such $\omega\in\Omega$ in all proofs with pathwise arguments. For notational simplicity, we will also omit $\omega$ in brackets. 
\end{remark}

\begin{remark}
    Due to the property \eqref{eq: double sandwich 1}, the setting described in Assumptions \ref{assum: B} will be referred to as the \textbf{two-sided sandwich case} since the solution is ``sandwiched'' between $\varphi$ and $\psi$ a.s. Similarly, the property \eqref{eq: one sandwich 1} justifies the name \textbf{one-sided sandwich case} for the setting corresponding to Assumptions \ref{assum: A}. In both cases \ref{assum: A} and \ref{assum: B}, the solution to \eqref{eq: sandwiched SDE, general form} will be referred to as a \textbf{sandwiched process}.
\end{remark}

\begin{remark}
    Note that assumptions \textbf{(A4)} and \textbf{(B4)} are not required for Theorem \ref{th: properties of sandwich} to hold and will be used later on. 
\end{remark}

In what follows, conditions \eqref{eq: moment condition on Lambda 1}, \eqref{eq: moment condition on Lambda 2} and \eqref{eq: moment condition on Lambda 3} will play an important role since the $L^r(\Omega;L^\infty([0,T]))$-convergence of the approximation scheme will directly follow from the integrability of $\Lambda$. However it should be noted that these conditions are not very restricting as indicated in the following example.

\begin{example}{\textit{(H\"older Gaussian noises)}}
    Let $Z=\{Z(t),~t\in[0,T]\}$ be an arbitrary H\"older continuous Gaussian process satisfying \textbf{(Z1)}--\textbf{(Z2)}. In this case, by \cite{ASVY2014}, the random variable $\Lambda$ from \textbf{(Z2)} can be chosen to have moments of \textit{all} orders.
\end{example}

We now complete the Section with some examples of the sandwiched processes.

\begin{example}{(\emph{Generalized CIR and CKLS/CEV models})}
    Let $\varphi \equiv 0$, $Z$ satisfy \textbf{(Z1)}--\textbf{(Z2)} with $\lambda \in(0,1)$ and $Y(0)$, $\kappa_1$, $\kappa_2 >0$, $\gamma > \frac{1}{\lambda} - 1$ be given. Then, by Theorem \ref{th: properties of sandwich}, 1), the SDE of the form
    \begin{equation}\label{eq: CIR/CEV with additive noise}
        Y(t) = Y(0) + \int_0^t \left(\frac{\kappa_1}{Y^{\gamma}(s)} - \kappa_2 Y(s)\right)ds + Z(t), \quad t\in[0,T],
    \end{equation}
    has a unique positive solution. Moreover, it can be shown (see \cite[Subsection 4.2]{DNMYT2020}) that, if $\lambda > \frac{1}{2}$, stochastic process $X(t) := Y^{1+\gamma}(t)$, $t\in[0,T]$, a.s. satisfies the SDE of the form
    \begin{equation}\label{eq: CIR/CEV}
        X(t) = X(0) + (1+\gamma)\int_0^t\left( \kappa_1  - \kappa_2 X(s) \right) ds + \int_0^t X^\alpha(s) dZ(s), \quad t\in[0,T],
    \end{equation}
    where $\alpha := \frac{\gamma}{1+\gamma} \in (0,1)$ and the integral w.r.t. $Z$ exists a.s. as a pathwise limit of Riemann-Stieltjes integral sums. As mentioned already, the \eqref{eq: CIR/CEV} appears in finance in the standard Brownian setting and is called \textit{Chan–Karolyi–Longstaff–Sanders} (CKLS) or \textit{constant elasticity of variance} (CEV) model (see e.g. \cite{Andersen2006, CKLS, Cox1975NotesOO}). If $\alpha = \frac{1}{2}$ (i.e. when $\gamma = 1$), the equation \eqref{eq: CIR/CEV} is also known as the \textit{Cox-Ingersoll-Ross} (CIR) equation \cite{COX1981, COX1985-1, COX1985-2}.
\end{example}

\begin{example}\label{ex: TSB}{(\textit{Generalized TSB model})}
    Let $\varphi \equiv -1$, $\psi \equiv 1$, $Y(0) \in (-1,1)$, $Z$ satisfy \textbf{(Z1)}--\textbf{(Z2)} with $\lambda > \frac{1}{2}$ and $\kappa > 0$. Then, by Theorem \ref{th: properties of sandwich}, 2), the SDE of the form
    \begin{equation}\label{eq: TSB model}
        Y(t) = Y(0) - \int_0^t \frac{\kappa Y(s)}{1-Y^2(s)}ds + Z(t), \quad t\in[0,T],
    \end{equation}
    has a unique solution such that $-1 < Y(t) < 1$ for all $t\in[0,T]$ a.s. In the standard Brownian setting, the SDE of the type \eqref{eq: TSB model} is known as the Tsallis--Stariolo--Borland (TSB) model and is used in biophysics (for more details, see e.g. \cite[Subsection 2.3]{Domingo2019} or \cite[Chapter 3 and Chapter 8]{BoundedNoises2013}).
\end{example}

\begin{example}\label{ex: generalized TSB}
    For the given $Z$ satisfying \textbf{(Z1)}--\textbf{(Z2)} with $\lambda\in(0,1)$,  $\lambda$-H\"older continuous functions $\varphi$, $\psi$, $\varphi(t) < \psi(t)$, $t\in[0,T]$, and $Y(0) \in (\varphi(0), \psi(0))$ consider the SDE of the form
    \[
        Y(t) = Y(0) + \int_0^t \left(\frac{\kappa_1}{(Y(s) - \varphi(s))^\gamma}-  \frac{\kappa_2}{(\psi(s) - Y(s))^\gamma} - \kappa_3 Y(s)\right)ds + Z(t), \quad t\in[0,T],
    \]
    where $\kappa_1$, $\kappa_2 > 0$, $\kappa_3\in \mathbb R$ and $\gamma > \frac{1}{\lambda} - 1$. By Theorem \ref{th: properties of sandwich}, 2), this SDE has a unique solution such that $\varphi(t) < Y(t) < \psi(t)$ a.s. Note that the TSB drift from \eqref{eq: TSB model} also has this shape with $\varphi \equiv -1$, $\psi \equiv 1$, $\gamma = 1$, $\kappa_1 = \kappa_2 = \frac{\kappa}{2}$ and $\kappa_3 = 0$ since
    \[
        -\frac{\kappa y}{1 - y^2} = \frac{\kappa}{2}\left( \frac{1}{y+1} - \frac{1}{1-y} \right).
    \]
\end{example}

\begin{notation}
    In what follows, $C$ denotes any positive deterministic constant that does not depend on the partition and the exact value of which is not relevant. Note that $C$ may change from line to line (or even within one line). 
\end{notation}

\section{The approximation scheme for the two-sided sandwich}\label{sec: two-sided case}

We will start by considering the numerical scheme for the \textit{two-sided sandwich case} which turns out to be slightly simpler due to boundedness of $Y$. Let the noise $Z$ satisfy \textbf{(Z1)}--\textbf{(Z2)}, $Y(0)$ and $b$ satisfy Assumptions \ref{assum: B} and $Y = \{Y(t),~t \in [0,T]\}$ be the unique solution of the SDE \eqref{eq: sandwiched SDE, general form}. Consider a uniform partition $\{0=t_0 < t_1<...<t_N=T\}$ of $[0,T]$, $t_k := \frac{Tk}{N}$, $k=0,1,..., N$, with the mesh $\Delta_N:=\frac{T}{N}$ such that
\begin{equation}\label{eq: condition on the mesh}
    c_3 \Delta_N < 1,
\end{equation}
where $c_3$ is an upper bound for $\frac{\partial b}{\partial y}$ from \textbf{(B4)}. Let us define $\widehat Y(t)$ as follows:
\begin{equation}\label{eq: definition of backward Euler scheme}
\begin{aligned}
    \widehat Y(0) &= Y(0),
    \\
    \widehat Y(t_{k+1}) &= \widehat Y(t_{k}) + b(t_{k+1}, \widehat Y(t_{k+1}))\Delta_N + (Z(t_{k+1}) - Z(t_k)),
    \\
    \widehat Y(t) &= \widehat Y(t_k), \quad t\in[t_k, t_{k+1}),
\end{aligned}
\end{equation}
where the second expression is considered as an equation with respect to $\widehat Y(t_{k+1})$. 

\begin{remark}
    Equation with respect to $\widehat Y(t_{k+1})$ from \eqref{eq: definition of backward Euler scheme} has a unique solution such that $\widehat Y(t_{k+1}) \in (\varphi(t_{k+1}), \psi(t_{k+1}))$. Indeed, for any fixed $t\in[0,T]$ and any $z\in\mathbb R$, consider the equation
    \begin{equation}\label{eq: equation w.r.t. y}
        y - b(t, y)\Delta_N = z
    \end{equation}
    w.r.t. $y$. Assumption \textbf{(B4)} together with condition \eqref{eq: condition on the mesh} imply that $(y - b(t, y)\Delta_N)'_y > 0$ and, by \textbf{(B3)},
    \begin{gather*}
        y - b(t, y)\Delta_N \to -\infty, \quad y \to \varphi(t)+,
        \\
        y - b(t, y)\Delta_N \to \infty, \quad y \to \psi(t)-.
    \end{gather*}
    Thus there exists a unique $y \in (\varphi(t), \psi(t))$ satisfying \eqref{eq: equation w.r.t. y}.
\end{remark}
\begin{remark}
    The value of $\widehat Y(t)$ for $t\in[0,T]\setminus\{t_0,...,t_N\}$ can also be defined via linear interpolation as
    \[
        \widehat Y(t) = \frac{1}{\Delta_N}\left( (t_{k+1} - t) \widehat Y(t_k) + (t - t_k) \widehat Y(t_{k+1}) \right), \quad t\in[t_k, t_{k+1}), \quad k=0,...,N-1.
    \]
    In such case all results of this section hold with almost no changes in the proofs.
\end{remark}

\begin{remark}
    The algorithms of the type \eqref{eq: definition of backward Euler scheme} are sometimes called the \textbf{drift-implicit} \cite{Alfonsi_2005, Alfonsi_2013, Dereich_Neuenkirch_Szpruch_2012} or \textbf{backward} \cite{Hong2020} Euler approximation schemes.
\end{remark}

Before presenting the main results of this section, we require some auxiliary lemmas. First of all, we note that the values $\widehat Y(t_n)$, $n=0,1,...,N$, of the discretized process are bounded away from both $\varphi$ and $\psi$ by random variables that do not depend on the partition. Namely, we have the following result that can be regarded as a discrete modification of arguments in \cite[Theorem 3.2]{DNMYT2020}.

\begin{lemma}\label{lemma: bounds for discretized process}
    Let $Z$ satisfy \textbf{(Z1)}--\textbf{(Z2)}, Assumptions \ref{assum: B} hold and the mesh of the partition $\Delta_N$ satisfy \eqref{eq: condition on the mesh}. Then there exist deterministic constants $L_1$ and $L_2>0$ depending only on $Y(0)$, the shape of the drift $b$ and $\lambda$, such that
    \[
        \varphi(t_n) + \frac{L_1}{ ( L_2 + \Lambda )^{ \frac{1}{\gamma\lambda + \lambda - 1} } } \le \widehat Y(t_n) \le \psi(t_n) - \frac{L_1}{ ( L_2 + \Lambda )^{\frac{1}{\gamma\lambda + \lambda - 1}} }, \quad n = 0, 1,... ,N, \quad a.s.,
    \]
    where $\Lambda$ is from \textbf{(Z2)} and $\gamma$ is from \textbf{(B3)}. 
\end{lemma}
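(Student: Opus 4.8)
The plan is to mimic the pathwise argument behind \cite[Theorem 3.2]{DNMYT2020}, but adapted to the discrete recursion \eqref{eq: definition of backward Euler scheme}. The key observation is that the backward Euler step can be rewritten, on each interval $[t_k,t_{k+1}]$, as a discrete analogue of the integral equation: from $\widehat Y(t_{k+1}) - \widehat Y(t_k) = b(t_{k+1},\widehat Y(t_{k+1}))\Delta_N + (Z(t_{k+1}) - Z(t_k))$ one sees that, as long as $\widehat Y(t_{k+1})$ is close to $\varphi(t_{k+1})$, the drift term $b(t_{k+1},\widehat Y(t_{k+1}))\Delta_N$ is large and positive by \textbf{(B3)}, which forces $\widehat Y$ to be pushed away from $\varphi$; symmetrically near $\psi$ the drift is large and negative. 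So the scheme inherits the ``repulsion'' from the boundaries that produces the sandwich bound. I would first fix a Hölder path of $Z$ (Remark \ref{rem: pathwise}) and work with the deterministic constant $\Lambda = \Lambda(\omega)$, and treat only the lower bound $\widehat Y(t_n) \ge \varphi(t_n) + L_1(L_2+\Lambda)^{-1/(\gamma\lambda+\lambda-1)}$; the upper bound near $\psi$ follows by the mirror argument, using the second inequality in \textbf{(B3)}.

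The core of the argument is a discrete ``last exit'' or ``worst index'' analysis. Let $\varepsilon > 0$ be a threshold to be optimized, and suppose some $\widehat Y(t_n)$ drops below $\varphi(t_n) + \varepsilon$. Let $m \le n$ be the largest index $\le n$ with $\widehat Y(t_m) \ge \varphi(t_m) + \varepsilon$ (if no such index exists, handle the base case using \textbf{(B1)}, which gives $\widehat Y(0) = Y(0) > \varphi(0)$, so one can take $\varepsilon$ small enough that $m \ge 0$ is well-defined), so that on the indices $m{+}1,\dots,n$ we have $\widehat Y(t_j) - \varphi(t_j) < \varepsilon$. Summing the recursion from $m{+}1$ to $n$,
\[
    \widehat Y(t_n) - \widehat Y(t_m) = \sum_{j=m+1}^{n} b(t_{j}, \widehat Y(t_{j}))\Delta_N + \big(Z(t_n) - Z(t_m)\big).
\]
On the ``bad'' block each drift value is at least $c_2 (\widehat Y(t_j) - \varphi(t_j))^{-\gamma} > c_2 \varepsilon^{-\gamma}$ by \textbf{(B3)} (once $\varepsilon < y_*$), so the drift sum is bounded below by $c_2\varepsilon^{-\gamma}(t_n - t_m)$; the noise increment is bounded in absolute value by $\Lambda (t_n - t_m)^\lambda$ via \textbf{(Z2)}; and $\widehat Y(t_n) - \widehat Y(t_m) \ge \varphi(t_n) - \varphi(t_m) - \varepsilon \ge -\|\varphi\|_{\lambda}(t_n-t_m)^\lambda - \varepsilon$ using Hölder continuity of $\varphi$, where $\widehat Y(t_m) < \varphi(t_m) + 2\varepsilon$ must also be used to bound $\widehat Y(t_m)$ from above (this is where one needs that $\widehat Y(t_m)$ is not too far above $\varphi(t_m)$ either — one should choose the stopping index $m$ as the last index where $\widehat Y$ exceeds the threshold and combine with a one-step estimate to control how far above the threshold $\widehat Y(t_m)$ can be). Writing $\tau := t_n - t_m$, these combine to an inequality of the form
\[
    c_2 \varepsilon^{-\gamma} \tau \le C(1 + \Lambda)\tau^\lambda + C\varepsilon + (\text{one-step correction}),
\]
and then optimizing over $\tau$ (or bounding $\tau \le T$ and $\tau \ge \Delta_N$ appropriately) yields a lower bound $\varepsilon \ge L_1 (L_2 + \Lambda)^{-1/(\gamma\lambda + \lambda - 1)}$, i.e. exactly the claimed form, since the exponent arising from balancing $\varepsilon^{-\gamma}\tau$ against $\Lambda \tau^\lambda$ is precisely $\gamma\lambda + \lambda - 1 > 0$ by the assumption $\gamma > \frac1\lambda - 1$ in \textbf{(B3)}.

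The delicate point, and the main obstacle, is the ``one-step correction'': unlike in the continuous case where the exit time from the strip $\{y < \varphi + \varepsilon\}$ is exact, here $\widehat Y(t_m)$ may already sit well above $\varphi(t_m) + \varepsilon$, or the drop across a single step $[t_m, t_{m+1}]$ may be large because the drift is evaluated at the \emph{right} endpoint. I would control this by re-deriving the one-step bound from the implicit equation \eqref{eq: equation w.r.t. y}: since $y \mapsto y - b(t,y)\Delta_N$ is increasing (by \textbf{(B4)} and \eqref{eq: condition on the mesh}) and equals $\widehat Y(t_m)$ for $y = \widehat Y(t_{m+1})$, one gets a quantitative lower bound on $\widehat Y(t_{m+1})$ in terms of $\widehat Y(t_m)$ and $\Delta_N$, showing the one-step drop is $O(\Delta_N \cdot \varepsilon^{-\gamma} + \Lambda\Delta_N^\lambda)$, which is absorbed into the same optimization because $\Delta_N \le \tau$. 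The condition \eqref{eq: condition on the mesh} is exactly what guarantees the recursion is well-posed and monotone so that these comparisons go through; I would also remark that the constants $L_1, L_2$ can be chosen uniformly in $N$ precisely because every estimate above is in terms of $T$, $\Delta_N \le T$, the structural constants $c_1, c_2, c_3, p, \gamma, \lambda$, $Y(0)$, and $\|\varphi\|_\lambda, \|\psi\|_\lambda$, none of which depend on the partition.
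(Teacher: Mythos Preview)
Your strategy---fix a H\"older path, run a last-exit argument at a threshold $\varepsilon$, sum the recursion over the ``bad'' block, use \textbf{(B3)} to lower-bound the drift sum by $c_2\varepsilon^{-\gamma}\tau$, and control the noise and $\varphi$ increments by H\"older continuity---is exactly the paper's approach. However, you have overcomplicated the execution and in doing so set up the key inequality in the wrong direction.

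The ``one-step correction'' and the worry that $\widehat Y(t_m)$ may sit far above $\varphi(t_m)+\varepsilon$ are both unnecessary. You never need an \emph{upper} bound on $\widehat Y(t_m)$: the argument is a direct lower bound on $\widehat Y(t_n)-\varphi(t_n)$, not a contradiction. Writing $\tau=t_n-t_m$ and using only $\widehat Y(t_m)\ge\varphi(t_m)+\varepsilon$, the summed recursion gives
\[
    \widehat Y(t_n)-\varphi(t_n)
    \;=\;\bigl(\widehat Y(t_m)-\varphi(t_n)\bigr)+\sum_{j=m+1}^{n} b(t_j,\widehat Y(t_j))\Delta_N+\bigl(Z(t_n)-Z(t_m)\bigr)
    \;\ge\;\varepsilon+\frac{c_2}{\varepsilon^{\gamma}}\tau-(K+\Lambda)\tau^{\lambda},
\]
where $K$ is the H\"older constant of $\varphi$. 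Because the drift in the implicit scheme is evaluated at the \emph{right} endpoints $t_{m+1},\dots,t_n$, every term in the sum is already covered by \textbf{(B3)} once $\varepsilon<y_*$; there is no boundary step to patch. The paper then fixes $\varepsilon$ explicitly (proportional to $(L_2+\Lambda)^{-1/(\gamma\lambda+\lambda-1)}$) and checks by elementary calculus that the function $F_\varepsilon(\tau)=\varepsilon+c_2\varepsilon^{-\gamma}\tau-(L_2+\Lambda)\tau^{\lambda}$ has minimum over $\tau\ge 0$ equal to $\varepsilon/2$. This yields $\widehat Y(t_n)-\varphi(t_n)\ge\varepsilon/2$ directly, uniformly in $n$ and $N$. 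Your version, which tries to bound $\widehat Y(t_n)-\widehat Y(t_m)$ and extract an inequality of the form $c_2\varepsilon^{-\gamma}\tau\le C(1+\Lambda)\tau^{\lambda}+C\varepsilon$, does not close without the extra hypotheses you introduce, precisely because $\tau$ can be as small as $\Delta_N$.
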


\begin{proof}
    We will prove that
    \begin{equation}\label{eq: lower bound for discretised process, proof}
        \varphi(t_n) + \frac{L_1}{ ( L_2 + \Lambda )^{\alpha} } \le \widehat Y(t_n), \quad n = 0, 1,... ,N, \quad a.s.
    \end{equation}
    by using the pathwise argument (see Remark \ref{rem: pathwise}). The other inequality can be derived in a similar manner. Recall that, by Assumptions \ref{assum: B}, $\varphi$ and $\psi$ are $\lambda$-H\"older continuous, i.e. there exists $K > 0$ such that
    \[
        |\varphi(t) - \varphi(s)| + |\psi(t) - \psi(s)| \le K |t-s|^{\lambda}, \quad t,s\in[0,T].
    \]
    Denote also
    \[
        \beta := \frac{    \lambda^{\frac{\lambda}{1 - \lambda}} -  \lambda^{\frac{1}{1 - \lambda}} }{ c_2^{\frac{\lambda}{1 - \lambda}}} > 0,
    \]
    where $c_2$ is from \textbf{(B3)},
    \[
        L_2 := K + \left(2 \beta\right)^{\lambda - 1} \left(\frac{(Y(0) - \varphi(0)) \wedge y_* \wedge (\psi(0) - Y(0))}{2}\right)^{1 - \lambda - \gamma\lambda} > 0,
    \]
    with the constants $y_*$ and $\gamma$ also from \textbf{(B3)}, and
    \[
        \varepsilon := \frac{1}{(2 \beta )^{\frac{1 - \lambda}{\gamma \lambda + \lambda -1}} (L_2 + \Lambda) ^{\frac{1 }{\gamma \lambda + \lambda -1}}}.
    \]
    Note that, with probability 1, 
    \[
        |\varphi(t) - \varphi(s)| + |\psi(t) - \psi(s)| + |Z(t) - Z(s)| \le (L_2 + \Lambda) |t-s|^{\lambda}, \quad t, s \in[0,T],
    \]
    and, furthermore, it is easy to check that $\varepsilon < Y(0) - \varphi(0)$, $\varepsilon < \psi(0) - Y(0)$ and $\varepsilon < y_*$.
    
    If $\widehat Y(t_n) \ge \varphi(t_n) + \varepsilon$ for a particular $n=0,1,...,N$, then, by definition of $\varepsilon$, the bound of the type \eqref{eq: lower bound for discretised process, proof} holds automatically. Suppose that there exists $n = 1,...,N$ such that $\widehat Y({t_n}) < \varphi(t_n) + \varepsilon$. Denote by $\kappa(n)$ the last point of the partition before $t_n$ on which $\widehat Y$ stays above $\varepsilon$, i.e. 
    \[
        \kappa(n) := \max\{k = 0,..., n-1 ~|~\widehat Y_{t_{k}} \ge \varphi(t_k) + \varepsilon\}
    \]
    (note that such point exists since $\widehat Y(t_0) - \varphi(0) = Y(0) - \varphi(0) > \varepsilon$). Then, for all $k=\kappa(n)+1,..., n$ we have that $\widehat Y(t_k) < \varepsilon < y_*$ and therefore, using \textbf{(B3)}, we obtain that, with probability 1,
    \begin{align*}
        \widehat Y(t_n)  - \varphi(t_n) &= \widehat Y(t_{\kappa(n)}) - \varphi(t_n) + \Delta_N \sum_{k = \kappa(n) + 1}^n b(t_k, \widehat Y(t_k) ) + Z(t_n) - Z(t_{\kappa(n)}) 
    \\
        &\ge \varepsilon + \varphi(t_{\kappa(n)}) - \varphi(t_n) + \frac{c_2}{\varepsilon^\gamma}(t_n - t_{\kappa(n)}) + Z(t_n) - Z(t_{\kappa(n)})
    \\
        &\ge \varepsilon + \frac{c_2}{\varepsilon^\gamma}(t_n - t_{\kappa(n)}) - (L_2+\Lambda) (t_n - t_{\kappa(n)})^\lambda.
    \end{align*}
    Consider a function $F_{\varepsilon} : \mathbb R_+ \to \mathbb R$ such that
    \[
        F_{\varepsilon} (t) = \varepsilon + \frac{ c_2 }{\varepsilon^{\gamma}} t - (L_2+\Lambda) t^{\lambda}.
    \]
    It is straightforward to verify that $F_\varepsilon$ attains its minimum at
    \[
        t_* := \left(\frac{\lambda}{c_2}\right)^{\frac{1}{1 - \lambda}} \varepsilon^{\frac{\gamma}{1 - \lambda}} (L_2 + \Lambda)^{\frac{1}{1 - \lambda}}
    \]
    and, taking into account the explicit form of $\varepsilon$,
    \begin{align*}
        F_\varepsilon(t_*) &= \varepsilon + \frac{\lambda^{\frac{1}{1 - \lambda}}}{ c_2^{\frac{\lambda}{1 - \lambda}}} \varepsilon^{\frac{\gamma \lambda}{1 - \lambda}} (L_2 + \Lambda)^{\frac{1}{1 - \lambda}} 
        - \frac{\lambda^{\frac{\lambda}{1 - \lambda}}}{c_2^{\frac{\lambda}{1 - \lambda}}} \varepsilon^{\frac{\gamma \lambda}{1 - \lambda}} (L_2 + \Lambda)^{\frac{1}{1 - \lambda}}
    \\
        &= \varepsilon - \beta \varepsilon^{\frac{\gamma \lambda}{1 - \lambda}} (L_2 + \Lambda)^{\frac{1}{1 - \lambda}} = \frac{1}{2^{\frac{\gamma\lambda }{\gamma \lambda + \lambda -1}} \beta^{\frac{1 - \lambda}{\gamma \lambda + \lambda -1}} (L_2 + \Lambda) ^{\frac{1 }{\gamma \lambda + \lambda -1}}} 
        \\
        & = \frac{\varepsilon}{2}.
    \end{align*}
    Namely, even if $\widehat Y(t_n) < \varphi(t_n) + \varepsilon$, we still have that, with probability 1,
    \[
        \widehat Y(t_n)  - \varphi(t_n) \ge F_{\varepsilon} ( t_n - t_{\kappa(n)} ) \ge F_\varepsilon(t_*) = \frac{\varepsilon}{2},
    \]
    and thus, with probability 1, for any $n=0,1,...,N$
    \begin{align*}
        \widehat Y(t_n)  &\ge  \varphi(t_n) + \frac{\varepsilon}{2} = \varphi(t_n) + \frac{1}{2^{\frac{\gamma\lambda }{\gamma \lambda + \lambda -1}} \beta^{\frac{1 - \lambda}{\gamma \lambda + \lambda -1}} (L_2 + \Lambda) ^{\frac{1 }{\gamma \lambda + \lambda -1}}} 
        \\
        & =: \varphi(t_n) + \frac{L_1}{(L_2+ \Lambda)^{\frac{1 }{\gamma \lambda + \lambda -1}}},
    \end{align*}
    where $L_1 := \frac{1}{2^{\frac{\gamma\lambda }{\gamma \lambda + \lambda -1}} \beta^{\frac{1 - \lambda}{\gamma \lambda + \lambda -1}} }$.
\end{proof}

\begin{remark}\label{rem: L1, L2 and alpha can be chosen jointly}
    It is clear that constants $L_1$ and $L_2$ in Lemma \ref{lemma: bounds for discretized process} can be chosen jointly for $Y$ and $\widehat Y$, so that the inequalities
    \[
        \varphi(t) + \frac{L_1}{ ( L_2 + \Lambda )^{\alpha} } \le Y(t) \le \psi(t) - \frac{L_1}{ ( L_2 + \Lambda )^{\alpha} }, \quad t\in [0,T],
    \]
    and
    \[
        \varphi(t_n) + \frac{L_1}{ ( L_2 + \Lambda )^{\alpha} } \le \widehat Y(t_n) \le \psi(t_n) - \frac{L_1}{ ( L_2 + \Lambda )^{\alpha} }, \quad n = 0, 1,... ,N,
    \]
    hold simultaneously with probability 1. 
\end{remark}

Next, we proceed with a simple property of the sandwiched process $Y$ in \eqref{eq: sandwiched SDE, general form}.

\begin{lemma}\label{lemma: Holder continuity of Y}
    Let $Z$ satisfy \textbf{(Z1)}--\textbf{(Z2)} and assumptions \textbf{(B1)}--\textbf{(B3)} hold. 
    \begin{itemize}
        \item[1)] There exists a positive random variable $\Upsilon$ such that, with probability 1,
        \[
            |Y(t) - Y(s)| \le \Upsilon |t-s|^\lambda, \quad t, s\in[0,T].
        \]
        \item[2)] If, for some $ r \ge 1$,
        \begin{equation}\label{eq: condition on Lambda to put expectation}
            \mathbb E \left[\Lambda^{\frac{r\max\{p, \gamma\lambda + \lambda - 1\}}{\gamma\lambda + \lambda - 1}}\right] < \infty,
        \end{equation}
        where $\lambda$ and $\Lambda$ are from \textbf{(Z2)}, $p$ is from \textbf{(B2)} and $\gamma$ is from \textbf{(B3)}, then one can choose $\Upsilon$ such that
        \[
            \mathbb E [\Upsilon^{r}] < \infty.
        \]
    \end{itemize}
\end{lemma}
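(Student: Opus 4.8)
The plan is to derive both statements from the integral form of \eqref{eq: sandwiched SDE, general form} together with the a.s.\ sandwich bounds of Theorem \ref{th: properties of sandwich}, 2), and then to track how the resulting random H\"older constant depends on $\Lambda$. Write $\alpha := \frac{1}{\gamma\lambda + \lambda - 1} > 0$ (positive because $\gamma > \frac{1}{\lambda} - 1$), and work pathwise as in Remark \ref{rem: pathwise}. For $0 \le s \le t \le T$ one has
\[
    Y(t) - Y(s) = \int_s^t b(u, Y(u))\, du + \bigl(Z(t) - Z(s)\bigr),
\]
so by \textbf{(Z2)} it suffices to produce a uniform-in-$u$ bound on $|b(u, Y(u))|$ and then combine it with $|Z(t)-Z(s)| \le \Lambda|t-s|^\lambda$.

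First I would record, from \eqref{eq: upper and lower bounds for sandwiched volatility, general case}, that $\varphi(u) + \rho \le Y(u) \le \psi(u) - \rho$ for all $u\in[0,T]$, where $\rho := L_1(L_2 + \Lambda)^{-\alpha}$; adding the two inequalities also gives $\rho \le \frac{1}{2}(\psi(u) - \varphi(u))$ and, at $u=0$, $\rho \le (Y(0) - \varphi(0)) \wedge (\psi(0) - Y(0))$. I would then take $\varepsilon := \frac{1}{3}\bigl(\rho \wedge 1 \wedge \lVert \psi - \varphi\rVert_\infty\bigr)$, which is admissible in \textbf{(B2)} and small enough that both $(u, Y(u))$ and the \emph{deterministic} point $(0, Y(0))$ lie in $\mathcal D_{\varepsilon,\varepsilon}$. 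Applying \textbf{(B2)} to this pair,
\[
    |b(u, Y(u))| \le |b(0, Y(0))| + \frac{c_1}{\varepsilon^p}\bigl(|Y(u) - Y(0)| + T^\lambda\bigr),
\]
and since $\varphi(u) < Y(u) < \psi(u)$ gives $|Y(u) - Y(0)| \le 2(\lVert\varphi\rVert_\infty \vee \lVert\psi\rVert_\infty)$ while $\varepsilon^{-p} \le C(\rho^{-p} + 1) \le C(1+\Lambda)^{\alpha p}$, I obtain $\sup_{u\in[0,T]}|b(u, Y(u))| \le C(1+\Lambda)^{\alpha p}$, with $C$ a deterministic constant depending only on $Y(0)$, the shape of $b$, $\lambda$ and $T$ (in particular not on the partition).

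Plugging this into the first display and using $t - s = (t-s)^{1-\lambda}(t-s)^\lambda \le T^{1-\lambda}(t-s)^\lambda$ would give
\[
    |Y(t) - Y(s)| \le \bigl(C(1+\Lambda)^{\alpha p} + \Lambda\bigr)|t - s|^\lambda,
\]
which settles 1) with $\Upsilon := C(1+\Lambda)^{\alpha p} + \Lambda$. For 2) I would note that $\alpha p = \frac{p}{\gamma\lambda+\lambda-1}$ and $\frac{\max\{p,\,\gamma\lambda+\lambda-1\}}{\gamma\lambda+\lambda-1} = \max\{\alpha p, 1\}$, so $\Upsilon \le C(1+\Lambda)^{\max\{\alpha p,1\}}$; raising to the power $r$ and using $(1+x)^q \le 2^{q-1}(1+x^q)$ with $q = r\max\{\alpha p, 1\} \ge 1$ reduces $\mathbb E[\Upsilon^r] < \infty$ exactly to \eqref{eq: condition on Lambda to put expectation}.

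The step I expect to be the main obstacle is the quantitative control of $|b(u,Y(u))|$: the key realization is that the radius $\varepsilon$ in \textbf{(B2)} can be taken proportional to the sandwich width $L_1(L_2+\Lambda)^{-\alpha}$, which is precisely what turns the $\varepsilon^{-p}$ factor into the power of $(1+\Lambda)$ prescribed by \eqref{eq: condition on Lambda to put expectation}; the choice of comparison point and the remaining estimates are routine. Note that assumption \textbf{(B4)} is not needed here, consistently with the hypotheses of the lemma.
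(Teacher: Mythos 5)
Your proof is correct and takes essentially the same approach as the paper: both arguments control $\sup_{u}|b(u,Y(u))|$ by combining the a.s.\ sandwich bound (which places $(u,Y(u))$ in $\mathcal D_{\varepsilon,\varepsilon}$ with $\varepsilon$ of order $(L_2+\Lambda)^{-1/(\gamma\lambda+\lambda-1)}$) with the local Lipschitz condition \textbf{(B2)}, and then add the H\"older modulus of $Z$. The only cosmetic differences are your choice of reference point $(0,Y(0))$ where the paper uses the mid-curve $\bigl(u,\tfrac{1}{2}(\varphi(u)+\psi(u))\bigr)$, and your explicit truncation of $\varepsilon$ to keep it within the admissible range of \textbf{(B2)}, which is slightly more careful bookkeeping than the paper's.
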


\begin{proof}
    Denote $\phi(t) := \frac{1}{2}(\psi(t) + \varphi(t))$, $t\in[0,T]$. By \eqref{eq: upper and lower bounds for sandwiched volatility, general case},
    \[
        \varphi(t) + \frac{L_1}{ ( L_2 + \Lambda )^{ \frac{1}{\gamma\lambda + \lambda - 1} } } \le Y(t) \le \psi(t) - \frac{L_1}{ ( L_2 + \Lambda )^{\frac{1}{\gamma\lambda + \lambda - 1}} }, \quad t\in [0,T], \quad a.s.,
    \]
    i.e. with probability 1 $(t, Y(t)) \in \mathcal D_{\frac{1}{\xi}, \frac{1}{\xi}}$, $t\in[0,T]$, where
    \begin{equation}\label{eq: distance to pieces of bread denoted by xi}
        \xi := \frac{( L_2 + \Lambda )^{\frac{1}{\gamma\lambda + \lambda - 1}}}{L_1}
    \end{equation}
    and $\mathcal D_{\frac{1}{\xi}, \frac{1}{\xi}}$ is defined by \eqref{eq: definition of the set D}. It is evident that $(t, \phi(t)) \in \mathcal D_{\frac{1}{\xi}, \frac{1}{\xi}}$, $t\in[0,T]$, therefore, using \textbf{(Z2)}, \textbf{(B2)} and \eqref{eq: double sandwich 1}, we can write that, with probability 1, for all $0\le s < t \le T$:
    \begin{equation}\label{proofeq: estimate for Holder cont two sided}
    \begin{aligned}
        |Y(t) - Y(s)| & \le \int_s^t |b(u, Y(u))|du + |Z(t) - Z(s)|
        \\
        & \le \int_s^t |b(u, Y(u)) - b(u, \phi(u))|du + \int_s^t |b(u, \phi(u))| du + \Lambda (t - s)^\lambda
        \\
        & \le c_1 \xi^p \int_s^t |Y(u) - \phi(u)| du + \max_{u\in[0,T]}|b(u, \phi(u))| (t-s) + \Lambda (t - s)^\lambda
        \\
        & \le \left( c_1 \xi^p \lVert \psi - \varphi \rVert_\infty + \max_{u\in[0,T]}|b(u, \phi(u))|\right)(t-s) + \Lambda (t - s)^\lambda
        \\
        & \le C(\xi^p + \Lambda + 1)(t-s)^\lambda,
    \end{aligned}
    \end{equation}
    where $C$ is a positive constant. Now one can put
    \begin{equation}\label{eq: definition of Upsilon}
        \Upsilon := C(\xi^p + \Lambda + 1)
    \end{equation}
    and observe that the definition of $\Upsilon$, \eqref{eq: condition on Lambda to put expectation} and \eqref{eq: distance to pieces of bread denoted by xi} imply that
    \[
        \mathbb E [\Upsilon^{r}] < \infty.
    \]
\end{proof}

\begin{corollary}\label{rem: Holder approximations}
    Under \textbf{(Z1)}--\textbf{(Z2)} and Assumptions \ref{assum: B}, using Lemma \ref{lemma: bounds for discretized process} and following the proof of Lemma \ref{lemma: Holder continuity of Y}, it is easy to obtain that there exists a random variable $\Upsilon$ independent of the partition such that with probability 1
    \begin{equation}\label{eq: pseudo Holder continuity of discretised process}
        |\widehat Y(t_k) - \widehat Y(t_n)| \le \Upsilon |t_k - t_n|^\lambda, \quad k,n = 0,...,N.
    \end{equation}
    Furthermore, just like in Lemma \ref{lemma: Holder continuity of Y}, if \eqref{eq: condition on Lambda to put expectation} for some for $r\ge 1$, then
    \[
        \mathbb E [\Upsilon^r] < \infty.
    \]
    Finally, just as in Remark \ref{rem: L1, L2 and alpha can be chosen jointly}, $\Upsilon$ can be chosen jointly for $Y$ and $\widehat Y$, so that
    \[
        |Y(t) - Y(s)| \le \Upsilon |t-s|^\lambda, \quad t,s\in[0,T],
    \]
    holds simultaneously with \eqref{eq: pseudo Holder continuity of discretised process} with probability 1.
\end{corollary}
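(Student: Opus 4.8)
The plan is to reproduce the proof of Lemma \ref{lemma: Holder continuity of Y} essentially line by line, with two substitutions: the integral representation of $Y$ coming from \eqref{eq: sandwiched SDE, general form} is replaced by the telescoping sum generated by the recursion \eqref{eq: definition of backward Euler scheme}, and the almost sure bounds \eqref{eq: upper and lower bounds for sandwiched volatility, general case} are replaced by their discrete counterpart from Lemma \ref{lemma: bounds for discretized process}. Concretely, I would first use Remark \ref{rem: L1, L2 and alpha can be chosen jointly} to fix constants $L_1, L_2 > 0$ valid simultaneously for $Y$ and $\widehat Y$, set $\xi := (L_2 + \Lambda)^{1/(\gamma\lambda + \lambda - 1)}/L_1$ as in \eqref{eq: distance to pieces of bread denoted by xi}, and put $\phi := \tfrac12(\varphi + \psi)$. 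By Lemma \ref{lemma: bounds for discretized process}, with probability $1$ one has $\widehat Y(t_n) \in [\varphi(t_n) + \tfrac1\xi, \psi(t_n) - \tfrac1\xi]$ for every $n = 0, \dots, N$, and of course $(t_n, \phi(t_n)) \in \mathcal D_{1/\xi, 1/\xi}$ as well.

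Next, I would fix $0 \le k < n \le N$ (the case $k = n$ being trivial) and telescope the second line of \eqref{eq: definition of backward Euler scheme} to get
\[
    \widehat Y(t_n) - \widehat Y(t_k) = \Delta_N \sum_{j = k+1}^n b(t_j, \widehat Y(t_j)) + Z(t_n) - Z(t_k).
\]
Splitting each term as $b(t_j, \widehat Y(t_j)) = \bigl(b(t_j, \widehat Y(t_j)) - b(t_j, \phi(t_j))\bigr) + b(t_j, \phi(t_j))$ and arguing exactly as in \eqref{proofeq: estimate for Holder cont two sided}, the first difference is bounded by $c_1 \xi^p \lVert \psi - \varphi \rVert_\infty$ via \textbf{(B2)} applied with $\varepsilon = 1/\xi$ (using $|\widehat Y(t_j) - \phi(t_j)| \le \lVert \psi - \varphi \rVert_\infty$), while the second is bounded by $\max_{u \in [0,T]} |b(u, \phi(u))|$. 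Since $\Delta_N(n - k) = t_n - t_k \le T^{1-\lambda}|t_n - t_k|^\lambda$ and $|Z(t_n) - Z(t_k)| \le \Lambda |t_n - t_k|^\lambda$ by \textbf{(Z2)}, this yields
\[
    |\widehat Y(t_n) - \widehat Y(t_k)| \le C(\xi^p + \Lambda + 1) |t_n - t_k|^\lambda
\]
for a deterministic $C$ depending only on $T$, $c_1$, $\lVert \psi - \varphi \rVert_\infty$ and $\max_u |b(u, \phi(u))|$, hence not on the partition. Setting $\Upsilon := C(\xi^p + \Lambda + 1)$ exactly as in \eqref{eq: definition of Upsilon} establishes the claimed estimate; the moment bound $\mathbb E[\Upsilon^r] < \infty$ under \eqref{eq: condition on Lambda to put expectation} is then word for word the final step of the proof of Lemma \ref{lemma: Holder continuity of Y}, since $\xi^p$ is a constant multiple of $(L_2 + \Lambda)^{p/(\gamma\lambda + \lambda - 1)}$. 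For the joint choice I would simply enlarge $C$ and retain the common $L_1, L_2$, so that one and the same $\Upsilon$ simultaneously controls the increments of $Y$ (as in Lemma \ref{lemma: Holder continuity of Y}) and of $\widehat Y$.

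I do not anticipate a genuine obstacle, since this is a routine discretisation of Lemma \ref{lemma: Holder continuity of Y}. The one point deserving care is that the drift-implicit recursion evaluates $b$ at the right endpoint $t_j$, so one must invoke Lemma \ref{lemma: bounds for discretized process} to guarantee that the \emph{sampled} point $(t_j, \widehat Y(t_j))$ — rather than $(t_{j-1}, \widehat Y(t_{j-1}))$ — lies in $\mathcal D_{1/\xi, 1/\xi}$; this is exactly what that lemma provides, so the bound on $|b(t_j, \widehat Y(t_j)) - b(t_j, \phi(t_j))|$ via \textbf{(B2)} goes through unchanged.
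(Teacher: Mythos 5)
Your proof is correct and is exactly the argument the paper has in mind when it says "using Lemma \ref{lemma: bounds for discretized process} and following the proof of Lemma \ref{lemma: Holder continuity of Y}": you telescope the recursion \eqref{eq: definition of backward Euler scheme} into a sum replacing the integral, use Lemma \ref{lemma: bounds for discretized process} (via Remark \ref{rem: L1, L2 and alpha can be chosen jointly}) to place the sampled points $(t_j,\widehat Y(t_j))$ in $\mathcal D_{1/\xi,1/\xi}$, and then reproduce the estimate \eqref{proofeq: estimate for Holder cont two sided} verbatim, arriving at the same $\Upsilon = C(\xi^p + \Lambda + 1)$ as in \eqref{eq: definition of Upsilon}. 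Your observation about the right-endpoint evaluation of $b$ in the drift-implicit recursion is a correct and worthwhile point to flag, and the moment bound and joint choice of $\Upsilon$ are handled exactly as the paper intends.
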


\begin{lemma}\label{lemma: backward approximations in discrete time points}
    Let $Z$ satisfy \textbf{(Z1)}--\textbf{(Z2)}, Assumptions \ref{assum: B} hold and the mesh of the partition $\Delta_N$ satisfy \eqref{eq: condition on the mesh}.
    \begin{itemize}
        \item[1)] For any $r\ge 1$, there exists a positive random variable $\mathcal C_1$ that does not depend on the partition such that
        \[
            \sup_{k=0,1,...,N} |Y(t_k) - \widehat{Y}(t_k)|^r \le \mathcal C_1 \Delta_N^{\lambda r}  \quad a.s. 
        \]
        \item[2)] If, additionally,
        \begin{equation}\label{eq: condition on moments of Lambda}
            \mathbb E\left[ \Lambda^{\frac{r(p+\max\{p, \gamma\lambda + \lambda - 1\})}{\gamma\lambda + \lambda -1}} \right] < \infty,
        \end{equation}
        where $\lambda$ and $\Lambda$ are from \textbf{(Z2)}, $p$ is from \textbf{(B2)} and $\gamma$ is from \textbf{(B3)}, then one can choose $\mathcal C_1$ such that  $\mathbb E [\mathcal C_1] < \infty$, i.e. there exists a deterministic constant $C$ that does not depend on the partition such that
        \[
            \mathbb E\left[\sup_{k=0,1,...,N} |Y(t_k) - \widehat{Y}(t_k)|^r \right] \le C\Delta_N^{\lambda r}.
        \]
    \end{itemize}
\end{lemma}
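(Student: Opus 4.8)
The plan is to compare $Y(t_k)$ and $\widehat Y(t_k)$ step by step through the defining recursions, exploiting the one-sided Lipschitz bound from \textbf{(B4)} to absorb the implicit drift term and using the H\"older estimates from Lemmas \ref{lemma: Holder continuity of Y} and the Corollary to control the local discretization error. Write $e_k := Y(t_k) - \widehat Y(t_k)$. From \eqref{eq: sandwiched SDE, general form} and \eqref{eq: definition of backward Euler scheme}, the noise increments cancel in the difference $e_{k+1} - e_k$, leaving
\[
    e_{k+1} = e_k + \int_{t_k}^{t_{k+1}} b(s, Y(s))\,ds - b(t_{k+1}, \widehat Y(t_{k+1}))\Delta_N .
\]
Split the integral term as $\int_{t_k}^{t_{k+1}} \big(b(s,Y(s)) - b(t_{k+1}, Y(t_{k+1}))\big)ds + b(t_{k+1}, Y(t_{k+1}))\Delta_N$, so that
\[
    e_{k+1} = e_k + \big(b(t_{k+1}, Y(t_{k+1})) - b(t_{k+1}, \widehat Y(t_{k+1}))\big)\Delta_N + R_{k+1},
\]
where $R_{k+1} := \int_{t_k}^{t_{k+1}} \big(b(s,Y(s)) - b(t_{k+1}, Y(t_{k+1}))\big)ds$ is the local error. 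By \textbf{(B4)} and the mean value theorem, $b(t_{k+1}, Y(t_{k+1})) - b(t_{k+1}, \widehat Y(t_{k+1})) = \frac{\partial b}{\partial y}(t_{k+1}, \theta_{k+1})\, e_{k+1}$ for some intermediate point $\theta_{k+1}$, hence equals $q_{k+1} e_{k+1}$ with $q_{k+1} < c_3$. This gives $e_{k+1}(1 - q_{k+1}\Delta_N) = e_k + R_{k+1}$; since $1 - q_{k+1}\Delta_N > 1 - c_3\Delta_N > 0$ by \eqref{eq: condition on the mesh}, we may divide and obtain $|e_{k+1}| \le |e_k| + |R_{k+1}|$ (using that $\frac{1}{1-q_{k+1}\Delta_N}$ may be large when $q_{k+1}$ is close to $c_3$, but is bounded by $\frac{1}{1-c_3\Delta_N}$; one either keeps this factor and gets a Gronwall-type constant, or notes $1-q_{k+1}\Delta_N \ge 1$ whenever $q_{k+1}\le 0$ and treats the sign of $q_{k+1}$ — the cleanest route is to keep $\frac{1}{1-c_3\Delta_N} \le \frac{1}{1-c_3\Delta_{N_0}}=:C$ for all $N\ge N_0$). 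Iterating, $|e_n| \le C \sum_{k=1}^{n} |R_k|$, so everything reduces to bounding the local errors $|R_k|$.

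For $R_k$, I would estimate $|b(s, Y(s)) - b(t_k, Y(t_k))|$ for $s \in [t_{k-1}, t_k]$ using \textbf{(B2)}: on the event of Theorem \ref{th: properties of sandwich} both $(s, Y(s))$ and $(t_k, Y(t_k))$ lie in $\mathcal D_{\varepsilon,\varepsilon}$ with $\varepsilon = \frac{1}{\xi} = \frac{L_1}{(L_2+\Lambda)^{1/(\gamma\lambda+\lambda-1)}}$, so
\[
    |b(s,Y(s)) - b(t_k, Y(t_k))| \le c_1 \xi^p \big(|Y(s) - Y(t_k)| + |s - t_k|^\lambda\big) \le c_1 \xi^p (\Upsilon + 1)\Delta_N^\lambda,
\]
by Lemma \ref{lemma: Holder continuity of Y}. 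Integrating over $[t_{k-1},t_k]$ gives $|R_k| \le c_1 \xi^p (\Upsilon + 1)\Delta_N^{1+\lambda}$, and summing the $n \le N$ terms (with $N\Delta_N = T$) yields $|e_n| \le C \xi^p(\Upsilon+1) \Delta_N^\lambda$ uniformly in $n$. Thus part 1) holds with $\mathcal C_1 := \big(C\xi^p(\Upsilon+1)\big)^r$, which does not depend on the partition. For part 2), raising to the power $r$ and taking expectations, I would apply the Cauchy–Schwarz (or H\"older) inequality to split $\mathbb E[\xi^{pr}(\Upsilon+1)^r]$; recalling $\xi \sim (L_2+\Lambda)^{1/(\gamma\lambda+\lambda-1)}$ from \eqref{eq: distance to pieces of bread denoted by xi} and that $\Upsilon \le C(\xi^p + \Lambda + 1) \sim C(\Lambda^{p/(\gamma\lambda+\lambda-1)} + \Lambda + 1)$ from \eqref{eq: definition of Upsilon}, the product $\xi^{pr}(\Upsilon+1)^r$ is dominated by a constant times $\Lambda$ raised to the power $\frac{r(p + \max\{p, \gamma\lambda+\lambda-1\})}{\gamma\lambda+\lambda-1}$, which is exactly the exponent in the moment condition \eqref{eq: condition on moments of Lambda}. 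Hence $\mathbb E[\mathcal C_1] < \infty$.

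The main obstacle — more bookkeeping than genuine difficulty — is tracking the correct powers of $\Lambda$ through the chain $\varepsilon \leadsto \xi \leadsto \Upsilon \leadsto$ local error, making sure the final exponent matches \eqref{eq: condition on moments of Lambda} and that all constants are genuinely partition-independent (in particular that the Gronwall-type factor $C^n$ does not appear: it does not, because the one-sided Lipschitz bound gives a contraction-or-neutral factor $\frac{1}{1-c_3\Delta_N}$ per step, whose $n$-th power stays bounded as $(1-c_3\Delta_N)^{-n} \to e^{c_3 T}$). A secondary subtlety is confirming that $(s, Y(s))$ stays inside the domain $\mathcal D_{\varepsilon,\varepsilon}$ with the \emph{same} $\varepsilon$ for the whole interval, which is guaranteed by the uniform-in-$t$ sandwich bound \eqref{eq: upper and lower bounds for sandwiched volatility, general case}; and that the same $\Upsilon$ and $\varepsilon$ work for $\widehat Y$ at the grid points, which is Lemma \ref{lemma: bounds for discretized process} together with Corollary \ref{rem: Holder approximations}.
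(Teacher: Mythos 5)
Your proposal is correct and follows essentially the same route as the paper: the same cancellation of noise increments, the same mean-value-theorem absorption of the implicit term using \textbf{(B4)} to produce the factor $(1 - q_{k+1}\Delta_N) > 1 - c_3\Delta_N > 0$, the same observation that iterating accumulates at most $(1-c_3\Delta_N)^{-N} \to e^{c_3 T}$ (the paper organizes this via the products $\zeta_n := \prod_{i\le n}(1-\frac{\partial b}{\partial y}(t_i,\Theta_i)\Delta_N)$ and a telescoping of $\tilde e_n = \zeta_n e_n$, which is the same estimate in different notation), and the same local-error bound via \textbf{(B2)}, the sandwich estimate \eqref{eq: upper and lower bounds for sandwiched volatility, general case} and Lemma~\ref{lemma: Holder continuity of Y}, yielding $\sup_n|e_n|^r \le C\xi^{pr}(1+\Upsilon^r)\Delta_N^{\lambda r}$.

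One small caution for part~2): do not actually use Cauchy--Schwarz or H\"older to split $\mathbb E[\xi^{pr}(\Upsilon+1)^r]$. Since both $\xi$ and $\Upsilon$ are explicit functions of $\Lambda$ (via \eqref{eq: distance to pieces of bread denoted by xi} and \eqref{eq: definition of Upsilon}), direct expansion dominates $\xi^{pr}(\Upsilon+1)^r$ by a polynomial in $\Lambda$ whose highest power is exactly $\frac{r(p+\max\{p,\gamma\lambda+\lambda-1\})}{\gamma\lambda+\lambda-1}$. By contrast, Cauchy--Schwarz would force you to control $\mathbb E[\xi^{2pr}]$ and $\mathbb E[(\Upsilon+1)^{2r}]$ separately, which in the regime $p < \gamma\lambda+\lambda-1$ requires moments of $\Lambda$ of order $\frac{2r(\gamma\lambda+\lambda-1)}{\gamma\lambda+\lambda-1}=2r$, strictly more than the stated condition $\frac{r(p+\gamma\lambda+\lambda-1)}{\gamma\lambda+\lambda-1}$. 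Your final exponent is right, which shows you implicitly used the direct domination; just drop the mention of Cauchy--Schwarz so the argument doesn't overshoot the hypothesis.
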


\begin{proof}
    Fix $\omega\in\Omega$ such that $Z(\omega, t)$, $t\in[0,T]$, is H\"older continuous (for simplicity of notation, we will omit $\omega$ in the brackets). Denote $e_n := Y(t_n) - \widehat Y(t_n)$, $\Delta Z_n := Z(t_n) - Z(t_{n-1})$. Then
    \begin{equation}\label{eq: IE error estimate}
    \begin{aligned}
        e_n &= Y(t_{n-1}) + \int_{t_{n-1}}^{t_{n}} b(s, Y(s)) ds + \Delta Z_{n} 
        \\
        &\quad- \widehat Y(t_{n-1}) - b(t_{n}, \widehat Y(t_n)) \Delta_N - \Delta Z_n
        \\
        &= e_{n-1} +  \left(b(t_n, Y(t_n)) - b(t_{n}, \widehat Y(t_n))\right) \Delta_N
        \\
        &\quad + \int_{t_{n-1}}^{t_{n}} (b(s,Y(s)) - b(t_n, Y(t_n)))ds.
    \end{aligned}
    \end{equation}
    By the mean value theorem,
    \begin{equation*}
    \begin{aligned}
        \left(b(t_n, Y(t_n)) - b(t_{n}, \widehat Y(t_n))\right) \Delta_N &= \frac{\partial b}{\partial y}(t_n, \Theta_n) \Delta_N  (Y(t_n) - \widehat Y(t_n)) 
        \\
        &= \frac{\partial b}{\partial y} (t_n, \Theta_n) \Delta_N e_n
    \end{aligned}
    \end{equation*}
    with $\Theta_n \in (Y(t_n) \wedge \widehat Y(t_n), Y(t_n) \vee \widehat Y(t_n) )$. Using this, we can rewrite \eqref{eq: IE error estimate} as follows:
    \begin{equation}\label{eq: IE error estimate 2}
    \begin{aligned}
        \left(1 - \frac{\partial b}{\partial y}(t_n, \Theta_n) \Delta_N \right) e_n &= e_{n-1} + \int_{t_{n-1}}^{t_{n}} (b(s,Y(s)) - b(t_n, Y(t_n)))  ds,
    \end{aligned}
    \end{equation}
    where
    \[
        1 - \frac{\partial b}{\partial y}(t_n, \Theta_n) \Delta_N > 1 - c_3\Delta_N > 0
    \]
    by \textbf{(B4)} and \eqref{eq: condition on the mesh}. 
    
    Next, denote 
    \[
        \zeta_0 := 1, \quad \zeta_{n} := \prod_{i=1}^{n} \left(1 - \frac{\partial b}{\partial y}(t_i, \Theta_i)\Delta_N\right)
    \]
    and define $\tilde e_n := \zeta_n e_n$. By multiplying both sides of \eqref{eq: IE error estimate 2} by $\zeta_{n-1}$, we obtain that
    \begin{equation}\label{eq: IE error estimate 3}
    \begin{aligned}
        \tilde e_n = \tilde e_{n-1} &+ \zeta_{n-1} \int_{t_{n-1}}^{t_{n}} (b(s,Y(s)) - b(t_n, Y(t_n)))  ds
    \end{aligned}
    \end{equation}
    and, expanding the terms $\tilde e_{i-1}$ in \eqref{eq: IE error estimate 3} one by one, $i=n, n-1,...,1$, and taking into account that $\tilde e_0 = 0$, we obtain that
    \begin{equation*}
    \begin{aligned}
        \tilde e_n &= \sum_{i=1}^n \zeta_{i-1} \int_{t_{i-1}}^{t_{i}} (b(s,Y(s)) - b(t_i, Y(t_i))) ds.
    \end{aligned}
    \end{equation*}
    Therefore
    \begin{equation*}
    \begin{aligned}
        e_n &= \sum_{i=1}^n \frac{\zeta_{i-1}}{\zeta_n} \int_{t_{i-1}}^{t_{i}} (b(s,Y(s)) - b(t_i, Y(t_i))) ds.
    \end{aligned}
    \end{equation*}
    Observe that, by assumption \textbf{(B4)} and \eqref{eq: condition on the mesh}, for any $i,n \in \mathbb N$, $i<n$,
    \begin{align*}
        \frac{\zeta_k}{\zeta_n} &= \prod_{i=k+1}^n \left(1 - \frac{\partial b}{\partial y}(t_i, \Theta_i) \Delta_N\right)^{-1} \le \prod_{i=k+1}^N (1 - c_3 \Delta_N)^{-1}
        \\
        &\le  (1 - c_3 \Delta_N)^{-N} = \left(1 - \frac{c_3T}{N}\right)^{-N} \to e^{c_3 T}, \quad N\to\infty,
    \end{align*}
    whence there exists a constant $C$ that does not depend on $i$, $n$ or $N$ such that
    \[
        \frac{\zeta_k}{\zeta_n} \le C.
    \]
    Using this, one can deduce that
    \begin{align*}
        |e_n|^r &\le C \left| \sum_{i=1}^n \frac{\zeta_{i-1}}{\zeta_n} \int_{t_{i-1}}^{t_{i}} (b(s,Y(s)) - b(t_i, Y(t_i))) ds \right|^r 
        \\
        &\le C\left(\sum_{i=1}^n \int_{t_{i-1}}^{t_{i}}\left| b(s,Y(s)) - b(t_i, Y(t_i))\right|  ds\right)^r.
    \end{align*}
    Note that $(t, Y(t)) \in \mathcal D_{\frac{1}{\xi}, \frac{1}{\xi}}$, where $\xi$ is defined by \eqref{eq: distance to pieces of bread denoted by xi} and $\mathcal D_{\frac{1}{\xi}, \frac{1}{\xi}}$ is defined via \eqref{eq: definition of the set D}, hence, by \textbf{(B2)} as well as Lemma \ref{lemma: Holder continuity of Y}, we can deduce that
    \begin{align*}
    &\left(\sum_{i=1}^n \int_{t_{i-1}}^{t_{i}}\left| b(s,Y(s)) - b(t_i, Y(t_i))\right|  ds\right)^r
    \\
        & \le C\xi^{pr}\left(\sum_{i=1}^n \int_{t_{i-1}}^{t_{i}}\left| s - t_i \right|^\lambda  ds\right)^r + C\xi^{pr} \left(\sum_{i=1}^n \int_{t_{i-1}}^{t_{i}}\left| Y(s) - Y(t_i)\right|  ds\right)^r
    \\
        &\le C\xi^{pr}\left(\sum_{i=1}^n \int_{t_{i-1}}^{t_{i}}\left| s - t_i \right|^\lambda  ds\right)^r + C\xi^{pr}\Upsilon^r \left(\sum_{i=1}^n \int_{t_{i-1}}^{t_{i}}\left| s - t_i\right|^\lambda  ds\right)^r
    \\
        & = C\xi^{pr}(1+\Upsilon^r)\left(\sum_{i=1}^n \frac{1}{(1+\lambda)} \Delta_N^{1+\lambda} \right)^r
    \\
        & \le C\xi^{pr}(1+ \Upsilon^r) \Delta_N^{\lambda r}.
\end{align*}
In other words, there exists a constant $C$ that does not depend on the partition such that
\begin{equation*}
    |e_n|^r = |Y(t_n) - \widehat Y(t_n)|^r \le C\xi^{pr}(1+\Upsilon^r) \Delta_N^{\lambda r}
\end{equation*}
and, since the right-hand side of the relation above does not depend on $n$ or $N$, we have
\begin{equation}\label{eq: estimation for distance between Y and hatY}
    \sup_{n = 0,...,N}|Y(t_n) - \widehat Y(t_n)|^r \le C\xi^{pr}(1+\Upsilon^r) \Delta_N^{\lambda r} =: \mathcal C_1 \Delta_N^{\lambda r}.
\end{equation}
It remains to notice that, by \eqref{eq: distance to pieces of bread denoted by xi} and \eqref{eq: definition of Upsilon},
\[
    \mathbb E \left[ \xi^{pr}(1+\Upsilon^r) \right] < \infty
\]
whenever \eqref{eq: condition on moments of Lambda} holds, which finally implies
\[
    \mathbb E \left[\sup_{n=0,...,N} |Y(t_n) - \widehat Y(t_n)|^r \right] \le \mathbb E[\mathcal C_1] \Delta_N^{\lambda r} =: C\Delta_N^{\lambda r}.
\]

\end{proof}

Now we are ready to proceed to the main results of this subsection.

\begin{theorem}\label{th: main result for double sandwich}
    Let $Z$ satisfy \textbf{(Z1)}--\textbf{(Z2)}, Assumptions \ref{assum: B} hold and the mesh of the partition $\Delta_N$ satisfy \eqref{eq: condition on the mesh}.
    \begin{itemize}
        \item[1)] For any $r\ge 1$, there exists a random variable $\mathcal C_2$ that does not depend on the partition such that
        \[
            \sup_{t\in[0,T]} |Y(t) - \widehat{Y}(t)|^r \le \mathcal C_2 \Delta_N^{\lambda r} \quad a.s.
        \]
        \item[2)] If, additionally,
        \begin{equation*}
            \mathbb E\left[ \Lambda^{\frac{r(p+\max\{p, \gamma\lambda + \lambda - 1\})}{\gamma\lambda + \lambda -1}} \right] < \infty,
        \end{equation*}
        where $\lambda$ and $\Lambda$ are from \textbf{(Z2)}, $p$ is from \textbf{(B2)} and $\gamma$ is from \textbf{(B3)}, then one can choose $\mathcal C_2$ such that $\mathbb E[\mathcal C_2] < \infty$, i.e. there exists a deterministic constant $C$ that does not depend on the partition such that
        \[
            \mathbb E\left[ \sup_{t\in[0,T]} |Y(t) - \widehat{Y}(t)|^r \right] \le C\Delta_N^{\lambda r}.
        \]
    \end{itemize}
\end{theorem}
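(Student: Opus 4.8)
The plan is to reduce the continuous-time supremum bound to the discrete-time bound from Lemma~\ref{lemma: backward approximations in discrete time points} by controlling the oscillation of both $Y$ and $\widehat Y$ on each subinterval $[t_k, t_{k+1}]$. First I would write, for arbitrary $t \in [t_k, t_{k+1})$,
\[
    |Y(t) - \widehat Y(t)| \le |Y(t) - Y(t_k)| + |Y(t_k) - \widehat Y(t_k)| + |\widehat Y(t_k) - \widehat Y(t)|,
\]
and recall that $\widehat Y(t) = \widehat Y(t_k)$ on $[t_k, t_{k+1})$ in the piecewise-constant version (the linear-interpolation version is handled identically since $\widehat Y(t)$ is then a convex combination of $\widehat Y(t_k)$ and $\widehat Y(t_{k+1})$, both within $\Upsilon\Delta_N^\lambda$ of each other). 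The middle term is bounded by $\mathcal C_1^{1/r}\Delta_N^\lambda$ via Lemma~\ref{lemma: backward approximations in discrete time points}, part~1). The first term is bounded by $\Upsilon|t - t_k|^\lambda \le \Upsilon \Delta_N^\lambda$ using Lemma~\ref{lemma: Holder continuity of Y}, part~1); the third term (in the interpolated case) is bounded by $\Upsilon\Delta_N^\lambda$ using Corollary~\ref{rem: Holder approximations}. Choosing $\Upsilon$ jointly for $Y$ and $\widehat Y$ as in Corollary~\ref{rem: Holder approximations}, taking $r$-th powers and using $(a+b+c)^r \le C(a^r + b^r + c^r)$, I obtain
\[
    \sup_{t\in[0,T]} |Y(t) - \widehat Y(t)|^r \le C\big(\mathcal C_1 + \Upsilon^r\big)\Delta_N^{\lambda r} =: \mathcal C_2 \Delta_N^{\lambda r},
\]
which proves part~1).

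For part~2), it remains to check that $\mathbb E[\mathcal C_2] < \infty$ under the stated moment condition on $\Lambda$. By construction $\mathcal C_2 = C(\mathcal C_1 + \Upsilon^r)$; Lemma~\ref{lemma: backward approximations in discrete time points}, part~2) gives $\mathbb E[\mathcal C_1] < \infty$ precisely when \eqref{eq: condition on moments of Lambda} holds, and since $p + \max\{p, \gamma\lambda+\lambda-1\} \ge \max\{p, \gamma\lambda+\lambda-1\}$, the same condition ensures $\mathbb E[\Upsilon^r] < \infty$ via Lemma~\ref{lemma: Holder continuity of Y}, part~2) (condition \eqref{eq: condition on Lambda to put expectation} is implied). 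Hence $\mathbb E[\mathcal C_2] < \infty$ and $\mathbb E\big[\sup_{t\in[0,T]}|Y(t) - \widehat Y(t)|^r\big] \le C\Delta_N^{\lambda r}$.

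I do not expect any genuine obstacle here: all the substantive work — the discrete-time rate and the joint H\"older control of $Y$ and $\widehat Y$ with integrable H\"older constant — has already been done in Lemmas~\ref{lemma: bounds for discretized process}, \ref{lemma: Holder continuity of Y}, \ref{lemma: backward approximations in discrete time points} and Corollary~\ref{rem: Holder approximations}. The only mild point of care is making sure the random variable $\Upsilon$ bounding the modulus of continuity of $\widehat Y$ is genuinely partition-independent (which is exactly the content of Corollary~\ref{rem: Holder approximations}, resting in turn on the partition-independent sandwich bounds of Lemma~\ref{lemma: bounds for discretized process}), so that the constant $C$ in the final estimate does not secretly depend on $N$.
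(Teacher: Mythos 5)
Your proof is correct and follows essentially the same route as the paper: decompose $|Y(t)-\widehat Y(t)|$ at the nearest grid point $t_{n(t)}$ to the left, bound the off-grid increment of $Y$ via the H\"older estimate of Lemma~\ref{lemma: Holder continuity of Y}, and bound the on-grid error via Lemma~\ref{lemma: backward approximations in discrete time points}; the integrability step then follows by noting the stated moment condition dominates condition~\eqref{eq: condition on Lambda to put expectation}. The only cosmetic difference is that the paper writes out the explicit form $C\xi^{pr}(1+\Upsilon^r)$ for $\mathcal C_1$ rather than quoting the lemma abstractly, and does not bother to spell out the (vanishing) third term for the piecewise-constant interpolant.
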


\begin{proof}
    Fix $\omega\in\Omega$ such that $Z(\omega, t)$, $t\in[0,T]$, is H\"older continuous (for simplicity of notation, we again omit $\omega$ in the brackets) and consider an arbitrary $t\in[0,T]$. Denote
    \[
        n(t) := \max\{n=0,1,...,N~|~t \ge t_n\},
    \]
    i.e. $t\in[t_{n(t)}, t_{n(t)+1})$. Then
    \begin{align*}
        |Y(t) - \widehat Y(t)|^r & \le C\left( | Y(t) - Y(t_{n(t)}) |^r + |Y(t_{n(t)}) - \widehat Y(t_{n(t)})|^r  \right)
        \\
        &\le C\Upsilon^r (t - t_{n(t)})^{\lambda r}+  C (L_2 + \Lambda)^{\frac{p r}{\gamma\lambda + \lambda - 1}} (1+\Upsilon^r) \Delta_N^{\lambda r} 
        \\
        &\le C\left(\Upsilon^r + (1+\Upsilon^r)(L_2 + \Lambda)^{\frac{p r}{\gamma\lambda + \lambda - 1}} \right) \Delta_N^{\lambda r},
    \end{align*}
    where we used Lemma \ref{lemma: Holder continuity of Y} to estimate $| Y(t) - Y(t_{n(t)}) |^r$ and bound \eqref{eq: estimation for distance between Y and hatY} to estimate $|Y(t_{n(t)}) - \widehat Y(t_{n(t)})|^r$. Therefore
    \[
        \sup_{t\in[0,T]} |Y(t) - \widehat{Y}(t)|^r \le C\left(\Upsilon^r + (1+\Upsilon^r)(L_2 + \Lambda)^{\frac{p r}{\gamma\lambda + \lambda - 1}} \right) \Delta_N^{\lambda r} =: \mathcal C_2 \Delta_N^{\lambda r}.
    \]
    Finally, using the same arguments as in Lemma \ref{lemma: Holder continuity of Y} and Lemma \ref{lemma: backward approximations in discrete time points}, it is easy to see that the condition
    \[
        \mathbb E\left[ \Lambda^{\frac{r(p+\max\{p, \gamma\lambda + \lambda - 1\})}{\gamma\lambda + \lambda -1}} \right] < \infty
    \]
    implies that 
    \[
        \mathbb E\left[ \Upsilon^r + (1+\Upsilon^r)(L_2 + \Lambda)^{\frac{p r}{\gamma\lambda + \lambda - 1}} \right] < \infty,
    \]
    therefore
    \[
        \mathbb E\left[\sup_{t\in[0,T]} |Y(t) - \widehat{Y}(t)|^r\right] \le C\left(\Upsilon^r + (1+\Upsilon^r)(L_2 + \Lambda)^{\frac{p r}{\gamma\lambda + \lambda - 1}} \right) \Delta_N^{\lambda r}
    \]
    for some constant $C>0$ that does not depend on the partition.
\end{proof}

\begin{theorem}\label{th: approx of Y-1}\hfill
    \begin{itemize}
        \item[1)] Let $Z$ satisfy \textbf{(Z1)}--\textbf{(Z2)}, Assumptions \ref{assum: B} hold and the mesh of the partition $\Delta_N$ satisfy \eqref{eq: condition on the mesh}. Then, for any $r\ge 1$, there exists a random variable $\mathcal C_3$ that does not depend on the partition such that
        \[
            \sup_{n = 0,1,...,N}\left|\frac{1}{Y(t_n) - \varphi(t_n)} - \frac{1}{\widehat{Y}(t_n) - \varphi(t_n)}\right|^r \le \mathcal C_3 \Delta_N^{\lambda r} \quad a.s.
        \]
        and
        \[
            \sup_{n = 0,1,...,N}\left|\frac{1}{\psi(t_n) - Y(t_n)} - \frac{1}{\psi(t_n) - \widehat{Y}(t_n)}\right|^r \le \mathcal C_3 \Delta_N^{\lambda r} \quad a.s.
        \]

        \item[2)] If, additionally, 
        \begin{equation}\label{eq: condition on moments of Lambda 2}
            \mathbb E\left[ \Lambda^{\frac{r(2 + p + \max\{p, \gamma\lambda + \lambda - 1\})}{\gamma\lambda + \lambda -1}} \right] < \infty,
        \end{equation}
        where $\lambda$ and $\Lambda$ are from \textbf{(Z2)}, $p$ is from \textbf{(B2)} and $\gamma$ is from \textbf{(B3)}, then one can choose $\mathcal C_3$ such that $\mathbb E[\mathcal C_3] < \infty$, i.e. there exists a deterministic constant $C$ that does not depend on the partition such that
        \[
            \mathbb E\left[ \sup_{n = 0,1,...,N}\left|\frac{1}{Y(t_n) - \varphi(t_n)} - \frac{1}{\widehat{Y}(t_n) - \varphi(t_n)}\right|^r \right] \le C\Delta_N^{\lambda r}
        \]
        and
        \[
            \mathbb E\left[ \sup_{n = 0,1,...,N}\left|\frac{1}{\psi(t_n) - Y(t_n)} - \frac{1}{\psi(t_n) - \widehat{Y}(t_n)}\right|^r \right] \le C\Delta_N^{\lambda r}.
        \]
    \end{itemize}
\end{theorem}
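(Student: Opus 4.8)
The plan is to reduce both estimates to the bound on $\sup_n|Y(t_n)-\widehat Y(t_n)|^r$ already obtained in Lemma \ref{lemma: backward approximations in discrete time points}, using the elementary identity $\frac1a-\frac1b=\frac{b-a}{ab}$. First I would write, for each $n=0,1,\dots,N$,
\[
    \frac{1}{Y(t_n) - \varphi(t_n)} - \frac{1}{\widehat Y(t_n) - \varphi(t_n)} = \frac{\widehat Y(t_n) - Y(t_n)}{(Y(t_n) - \varphi(t_n))(\widehat Y(t_n) - \varphi(t_n))},
\]
and analogously
\[
    \frac{1}{\psi(t_n) - Y(t_n)} - \frac{1}{\psi(t_n) - \widehat Y(t_n)} = \frac{Y(t_n) - \widehat Y(t_n)}{(\psi(t_n) - Y(t_n))(\psi(t_n) - \widehat Y(t_n))}.
\]

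The second step is to control the denominators uniformly in $n$ and in the partition. By Theorem \ref{th: properties of sandwich}, 2) together with Lemma \ref{lemma: bounds for discretized process}, combined as in Remark \ref{rem: L1, L2 and alpha can be chosen jointly}, there are deterministic constants $L_1,L_2>0$ not depending on $N$ such that, with $\alpha:=\frac{1}{\gamma\lambda+\lambda-1}$ and with probability $1$,
\[
    Y(t_n) - \varphi(t_n) \ge \frac{L_1}{(L_2+\Lambda)^{\alpha}}, \qquad \widehat Y(t_n) - \varphi(t_n) \ge \frac{L_1}{(L_2+\Lambda)^{\alpha}},
\]
and likewise $\psi(t_n)-Y(t_n)\ge L_1(L_2+\Lambda)^{-\alpha}$, $\psi(t_n)-\widehat Y(t_n)\ge L_1(L_2+\Lambda)^{-\alpha}$. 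Hence each denominator is bounded below by $L_1^2(L_2+\Lambda)^{-2\alpha}$, so raising to the $r$-th power and using Lemma \ref{lemma: backward approximations in discrete time points}, 1),
\[
    \sup_{n}\left|\frac{1}{Y(t_n) - \varphi(t_n)} - \frac{1}{\widehat Y(t_n) - \varphi(t_n)}\right|^r \le \frac{(L_2+\Lambda)^{2\alpha r}}{L_1^{2r}} \, \sup_n |Y(t_n) - \widehat Y(t_n)|^r \le \frac{(L_2+\Lambda)^{2\alpha r}}{L_1^{2r}}\, \mathcal C_1 \Delta_N^{\lambda r},
\]
and the same bound for the $\psi$-side. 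This proves part 1) with $\mathcal C_3 := C\,(L_2+\Lambda)^{2\alpha r} L_1^{-2r}\, \xi^{pr}(1+\Upsilon^r)$, recalling that in Lemma \ref{lemma: backward approximations in discrete time points} one has $\mathcal C_1=C\xi^{pr}(1+\Upsilon^r)$, independent of the partition.

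For part 2) it remains to track the power of $\Lambda$ in $\mathcal C_3$. Using $\xi = (L_2+\Lambda)^{\alpha}/L_1$ from \eqref{eq: distance to pieces of bread denoted by xi} and $\Upsilon = C(\xi^p+\Lambda+1)$ from \eqref{eq: definition of Upsilon}, one gets $1+\Upsilon^r \le C(1+\Lambda^r+\Lambda^{p\alpha r})\le C(1+\Lambda^{r\max\{p,1/\alpha\}\alpha})$ and $(L_2+\Lambda)^{2\alpha r}\xi^{pr}\le C(1+\Lambda)^{(2+p)\alpha r}$, whence
\[
    \mathcal C_3 \le C\bigl(1 + \Lambda^{(2+p+\max\{p,\,\gamma\lambda+\lambda-1\})\alpha r}\bigr),
\]
so that $\mathbb E[\mathcal C_3]<\infty$ precisely under \eqref{eq: condition on moments of Lambda 2}; taking expectations in the pathwise bound then yields the stated $L^r$-estimates. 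I do not expect a genuine obstacle here: the whole argument is the $\frac1a-\frac1b$ manipulation plus the uniform lower sandwich bounds, and the only point requiring care is the exponent bookkeeping for $\Lambda$, in particular keeping the two regimes $p\le \gamma\lambda+\lambda-1$ and $p>\gamma\lambda+\lambda-1$ separated through the $\max$, exactly as reflected in condition \eqref{eq: condition on moments of Lambda 2}.
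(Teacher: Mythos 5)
Your proposal is correct and matches the paper's proof essentially verbatim: same $\frac1a-\frac1b=\frac{b-a}{ab}$ decomposition, same use of Remark~\ref{rem: L1, L2 and alpha can be chosen jointly} to bound the denominators below, same appeal to estimate~\eqref{eq: estimation for distance between Y and hatY}, and the same exponent bookkeeping via $\xi=(L_2+\Lambda)^\alpha/L_1$ and $\Upsilon=C(\xi^p+\Lambda+1)$ to verify that \eqref{eq: condition on moments of Lambda 2} gives $\mathbb E[\mathcal C_3]<\infty$. Nothing to add.
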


\begin{proof}
    By Remark \ref{rem: L1, L2 and alpha can be chosen jointly} and estimate \eqref{eq: estimation for distance between Y and hatY}, with probability 1 for any $n=0,...,N$:
    \begin{align*}
        \left|\frac{1}{Y(t_n) - \varphi(t_n)} - \frac{1}{\widehat{Y}(t_n) - \varphi(t_n)}\right|^r &= \frac{|Y(t_n) - \widehat Y(t_n)|^r}{(Y_{t_n} - \varphi(t_n))^r(\widehat{Y}_{t_n} - \varphi(t_n))^r}
        \\
        & \le \frac{(L_2+ \Lambda)^{\frac{2r}{\gamma\lambda + \lambda - 1}}}{L_1^{2r}} \sup_{n=0,1,...,N} |Y(t_n) - \widehat Y(t_n)|^r
        \\
        & \le C (L_2+ \Lambda)^{\frac{2r}{\gamma\lambda + \lambda - 1}} \xi^{pr}(1+\Upsilon^r) \Delta_N^{\lambda r}
        \\
        & =: \mathcal C_3 \Delta_N^{\lambda r}.
    \end{align*}
    It remains to notice that, by \eqref{eq: distance to pieces of bread denoted by xi} and \eqref{eq: definition of Upsilon}, the condition \eqref{eq: condition on moments of Lambda 2} implies that $\mathbb E[\mathcal C_3] < \infty$. The second estimate can be obtained in a similar manner. 
\end{proof}

\section{One-sided sandwich case}\label{sec: one-sided case}

The drift-implicit Euler approximation scheme described in Section \ref{sec: two-sided case} for the two-sided sandwich can also be adapted for the one-sided setting that corresponds to Assumptions \ref{assum: A} on the SDE \eqref{eq: volatility introduction}. However, in the two-sided sandwich case the process $Y$ was bounded (which was utilized, e.g., in Lemma \ref{lemma: Holder continuity of Y}) and, moreover, the behaviour of $Y$ was similar near both $\varphi$ and $\psi$ so that it was sufficient to analyze only one of the bounds. In the one-sided case, each $Y(t)$, for $t\in[0,T]$, is not a bounded random variable, therefore the approach from Section \ref{sec: two-sided case} has to be adjusted. For this, we will be using the inequalities \eqref{eq: upper and lower bounds for one sandwiched volatility}.

Let the noise $Z$ satisfy \textbf{(Z1)}--\textbf{(Z2)}, $Y(0)$ and $b$ satisfy Assumptions \ref{assum: A} and $Y = \{Y(t),~t \in [0,T]\}$ be the unique solution of the SDE \eqref{eq: sandwiched SDE, general form}. In line with Section \ref{sec: two-sided case}, we consider a uniform partition $\{0=t_0 < t_1<...<t_N=T\}$ of $[0,T]$, $t_k := \frac{Tk}{N}$, $k=0,1,..., N$, with the mesh $\Delta_N:=\frac{T}{N}$ such that
\begin{equation}\label{eq: condition on the mesh one-sided}
    c_3 \Delta_N < 1,
\end{equation}
where $c_3$ is an upper bound for $\frac{\partial b}{\partial y}$ from assumption \textbf{(A4)}. The backward Euler approximation $\widehat Y(t)$ is defined in a manner similar to \eqref{eq: definition of backward Euler scheme}, i.e.
\begin{equation}\label{eq: definition of backward Euler scheme one-sided}
\begin{aligned}
    \widehat Y(0) &= Y(0),
    \\
    \widehat Y(t_{k+1}) &= \widehat Y(t_{k}) + b(t_{k+1}, \widehat Y(t_{k+1}))\Delta_N + (Z(t_{k+1}) - Z(t_k)),
    \\
    \widehat Y(t) &= \widehat Y(t_k), \quad t\in[t_k, t_{k+1}),
\end{aligned}
\end{equation}
where the second expression is considered as an equation with respect to $\widehat Y(t_{k+1})$. 

\begin{remark}
    Just as in the two-sided sandwich case, each $\widehat Y(t_k)$, $k=1,...,N$, is well defined since the equation
    \[
        y - b(t, y)\Delta_N = z
    \]
    has a unique solution w.r.t. $y$ such that $y > \varphi(t)$ for any fixed $t\in[0,T]$ and any $z\in\mathbb R$. To understand this, note that assumption \textbf{(A4)} together with \eqref{eq: condition on the mesh one-sided} imply that 
    \begin{equation}\label{proofeq: hat Y exists 1}
        (y - b(t, y)\Delta_N)'_y > 0.
    \end{equation}
    Second, by \textbf{(A3)},
    \begin{gather}\label{proofeq: hat Y exists 2}
        y - b(t, y)\Delta_N \to -\infty, \quad y \to \varphi(t)+.
    \end{gather}
    Next, by \textbf{(A2)}, for any $(s,y_1)$, $(s,y_2) \in \overline{\mathcal D_1} := \{(u,y)\in [0,T]\times \mathbb R_+, y\in [\varphi(u) + 1, \infty)\}$ we have that
    \[
        |b(s,y_1) - b(s,y_2)| \le c_1|y_1-y_2|,
    \]
    i.e.
    \[
        \sup_{(s,y)\in \overline{\mathcal D_1}} \left|\frac{\partial b}{\partial y} (s, y)\right| < \infty.
    \]
    Using this, \textbf{(A4)} and the mean value theorem, for any positive $y \ge \varphi(t) +1$
    \begin{align*}
        b(t,y) &= b(t,\varphi(t) + 1) + \frac{\partial b}{\partial y} (t, \theta_y) (y-1-\varphi(t)) 
        \\
        &\le \max_{s\in[0,T]} b(t,\varphi(t) + 1) + \max_{s\in[0,T]}|1+\varphi(s)| \sup_{(s,y)\in \overline{\mathcal D_1}} \left|\frac{\partial b}{\partial y} (s, y)\right| + c_3 y
        \\
        &=: C + c_3y,
    \end{align*}
    whence
    \begin{equation}\label{proofeq: hat Y exists 3}
        y - b(t,y)\Delta_N \ge -C\Delta_N + (1 - c_3\Delta_N) y \to \infty, \quad y\to\infty.
    \end{equation}
    Existence and uniqueness of the solution then follows from \eqref{proofeq: hat Y exists 1}--\eqref{proofeq: hat Y exists 3}.
\end{remark}

\begin{remark}
    Similarly to the two-sided sandwich case, the value of $\widehat Y(t)$ for $t\in[0,T]\setminus\{t_0,...,t_N\}$ can also be defined via linear interpolation with no changes in formulations of the results and almost no variations in the proofs.
\end{remark}

Our strategy for proving the convergence of $\widehat Y$ to $Y$ will be similar to what we have done in section \ref{sec: two-sided case}. Therefore we will be omitting the details highlighting only the points which are different from the two-sided sandwich case. We start with some useful properties of $\widehat Y$ and $Y$.

\begin{lemma}\label{lemma: bounds for discretized process one sided}
    Let $Z$ satisfy \textbf{(Z1)}--\textbf{(Z2)}, Assumptions \ref{assum: A} hold and the mesh of the partition $\Delta_N$ satisfy \eqref{eq: condition on the mesh one-sided}. Then there exist deterministic constants $L_1$, $L_2 > 0$ depending only on $Y(0)$, the shape of the drift $b$ and $\lambda$, such that
    \[
        \widehat Y(t_n) \ge \varphi(t_n) + \frac{L_1}{ ( L_2 + \Lambda )^{ \frac{1}{\gamma\lambda + \lambda - 1} } } \quad a.s.,
    \]
    where $\Lambda$ is from assumption \textbf{(Z2)} and $\gamma$ is from assumption \textbf{(A3)}. Moreover, there exist constants $L_3$, $L_4 > 0$ that also depend only on $Y(0)$, the shape of the drift $b$ and $\lambda$ such that
    \[
        \widehat Y(t_n) \le L_3 + L_4\Lambda, \quad n = 0, 1,... ,N, \quad a.s.
    \]
    for all partitions with the mesh satisfying $\frac{c_1 }{(Y(0) - \varphi(0))^p}\Delta_N < 1$ with $c_1$ and $p$ being from \textbf{(A2)}.
\end{lemma}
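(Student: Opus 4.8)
The lower bound I would prove by repeating verbatim the argument for the lower inequality in Lemma~\ref{lemma: bounds for discretized process}: that argument uses only \textbf{(B3)} (whose lower half is precisely \textbf{(A3)}), the H\"older continuity of $\varphi$, and the scheme itself, and never the upper obstacle $\psi$. One therefore keeps the same constants $\beta$, $L_1$, $L_2$ (now with $K$ the H\"older constant of $\varphi$ alone and the minimum in $L_2$ taken only over $Y(0)-\varphi(0)$ and $y_*$), the same $\varepsilon = \varepsilon(\Lambda)$, and the same pathwise ``last exit from $\mathcal D_\varepsilon$'' dichotomy: if $\widehat Y(t_n) < \varphi(t_n) + \varepsilon$ for some $n$, let $\kappa(n)$ be the last grid index before $t_n$ with $\widehat Y(t_{\kappa(n)}) \ge \varphi(t_{\kappa(n)}) + \varepsilon$; on the stretch $k = \kappa(n)+1,\dots,n$ one has $0 < \widehat Y(t_k) - \varphi(t_k) < \varepsilon < y_*$, so \textbf{(A3)} gives $b(t_k,\widehat Y(t_k)) \ge c_2\varepsilon^{-\gamma}$, and telescoping the scheme together with $|Z(t_n) - Z(t_{\kappa(n)})| \le \Lambda|t_n - t_{\kappa(n)}|^\lambda$ and the H\"older bound on $\varphi$ yields $\widehat Y(t_n) - \varphi(t_n) \ge F_\varepsilon(t_n - t_{\kappa(n)}) \ge \min_{t\ge 0} F_\varepsilon(t) = \varepsilon/2$, where $F_\varepsilon(t) = \varepsilon + c_2\varepsilon^{-\gamma} t - (L_2 + \Lambda) t^\lambda$. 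This is exactly the claimed bound $\widehat Y(t_n) \ge \varphi(t_n) + L_1(L_2+\Lambda)^{-1/(\gamma\lambda+\lambda-1)}$.

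For the upper bound there is no restoring force from above, so $\widehat Y(t_n) \le L_3 + L_4\Lambda$ has to come from a Gr\"onwall-type estimate, not a barrier. The plan is to \emph{detrend}: set $\widehat g_n := \widehat Y(t_n) - Z(t_n)$, so that by \textbf{(Z1)} and the scheme $\widehat g_0 = Y(0)$ and $\widehat g_{n+1} = \widehat g_n + b(t_{n+1},\widehat Y(t_{n+1}))\Delta_N$. As in the Remark preceding the lemma, \textbf{(A2)} (applied on a region whose distance to $\varphi$ is of order $(Y(0)-\varphi(0))\wedge 1$ — this is where the auxiliary restriction $\tfrac{c_1}{(Y(0)-\varphi(0))^p}\Delta_N < 1$ is needed) together with \textbf{(A4)} provides a linear growth bound $b(t,y) \le C + c_3 y$ valid once $y$ is a fixed distance $d$ away from $\varphi(t)$. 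I would then split at each step: if $\widehat Y(t_{n+1}) < \varphi(t_{n+1}) + d$, then $\widehat g_{n+1} \le C + \lVert Z\rVert_\infty$ directly; otherwise $b(t_{n+1},\widehat Y(t_{n+1})) \le C + c_3(\widehat g_{n+1} + Z(t_{n+1}))$, which, using $c_3\Delta_N < 1$, rearranges to $\widehat g_{n+1} \le (1-c_3\Delta_N)^{-1}\bigl(\widehat g_n + (C + c_3\lVert Z\rVert_\infty)\Delta_N\bigr)$. Setting $u_n := \max\{C + \lVert Z\rVert_\infty,\, \widehat g_n\}$ turns both cases into the single discrete Gr\"onwall recursion $u_{n+1} \le a u_n + a(C + c_3\lVert Z\rVert_\infty)\Delta_N$ with $a := (1-c_3\Delta_N)^{-1}$; iterating and using $\tfrac{a\Delta_N}{a-1} = \tfrac1{c_3}$ together with $a^N = (1-c_3T/N)^{-N} \le e^{c_3T}$ bounds $u_n$, hence $\widehat g_n$, by an affine function of $\lVert Z\rVert_\infty$ with partition-independent coefficients. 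Since $\lVert Z\rVert_\infty \le \Lambda T^\lambda$ by \textbf{(Z1)}--\textbf{(Z2)}, undoing the detrending via $\widehat Y(t_n) = \widehat g_n + Z(t_n)$ gives $\widehat Y(t_n) \le L_3 + L_4\Lambda$ with $L_3,L_4$ depending only on $Y(0)$, the shape of $b$ and $\lambda$.

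The crux — and the reason for detrending — is that a naive recursion on $\widehat Y(t_n)$ itself would accumulate the noise increments as $\sum_k |Z(t_k) - Z(t_{k-1})| \le \Lambda T^\lambda N^{1-\lambda}$, which diverges as $N \to \infty$ since $\lambda < 1$; passing to $\widehat Y - Z$ collapses this into the single bounded forcing term $\lVert Z\rVert_\infty$ and is exactly what makes the Gr\"onwall closure work. The only other point needing care is the behaviour near $\varphi$, where the linear growth bound for $b$ fails: there, however, $\widehat Y(t_{n+1})$ is itself bounded (below $\varphi(t_{n+1}) + d$), so $\widehat g_{n+1}$ is controlled for free — this dichotomy is precisely what is built into $u_n$, and it is also where the auxiliary mesh restriction involving $Y(0)-\varphi(0)$ is used, to keep the first steps of the iteration inside the region where the linear estimate is available.
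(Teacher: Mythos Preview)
Your lower-bound argument is exactly what the paper does: it states that this part ``is identical to the corresponding one in Lemma~\ref{lemma: bounds for discretized process} and will be omitted.''

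For the upper bound your strategy is correct but genuinely different from the paper's. The paper does \emph{not} detrend. Instead it mirrors the lower-bound structure: assuming $\widehat Y(t_{n+1}) > \varphi(t_{n+1}) + (Y(0)-\varphi(0))$, it lets $\kappa(n)$ be the last grid index with $\widehat Y(t_{\kappa(n)}) \le \varphi(t_{\kappa(n)}) + (Y(0)-\varphi(0))$ and telescopes the scheme from $t_{\kappa(n)}$ to $t_k$, so that the noise enters only through the single H\"older increment $|Z(t_k)-Z(t_{\kappa(n)})| \le \Lambda T^\lambda$. On that stretch every $(t_i,\widehat Y(t_i)) \in \mathcal D_{Y(0)-\varphi(0)}$, and \textbf{(A2)} yields the linear growth $|b(t,y)| \le C + \tfrac{c_1}{(Y(0)-\varphi(0))^p}|y|$; a discrete Gr\"onwall inequality on $|\widehat Y(t_k)|$ then closes, after moving the implicit term to the left and dividing by $1-\tfrac{c_1}{(Y(0)-\varphi(0))^p}\Delta_N$ --- which is exactly where the auxiliary mesh restriction enters. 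Your detrending achieves the same collapse of the noise (into $\lVert Z\rVert_\infty$ rather than a divergent sum of increments) by a different device, and has the side benefit that the growth constant in your recursion is $c_3$ from \textbf{(A4)} rather than $\tfrac{c_1}{(Y(0)-\varphi(0))^p}$; consequently your argument, as written, needs only $c_3\Delta_N<1$ and in fact does \emph{not} use the auxiliary restriction at all. Your two parenthetical remarks that this restriction is ``where it is needed'' (for the linear growth bound, and ``to keep the first steps of the iteration inside the region'') are therefore misplaced: in the paper's proof it is needed to make the implicit Gr\"onwall step invertible, while in your proof the dichotomy built into $u_n$ already handles the near-$\varphi$ case at every step and no extra mesh condition is required.
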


\begin{proof}
    The proof of
    \[
        \widehat Y(t_n) \ge \varphi(t_n) + \frac{L_1}{ ( L_2 + \Lambda )^{ \frac{1}{\gamma\lambda + \lambda - 1} } }
    \]
    is identical to the corresponding one in Lemma \ref{lemma: bounds for discretized process} and will be omitted. Let us prove that
    \[
        \widehat Y(t_n) \le L_3 + L_4\Lambda \quad a.s.
    \]
    Fix $\omega\in\Omega$ for which $Z(\omega, t)$ is H\"older continuous, consider a partition with the mesh satisfying $\frac{c_1 }{(Y(0) - \varphi(0))^p}\Delta_N < 1$ and fix an arbitrary $n=0,1,...,N-1$. Assume that $\widehat Y(t_{n+1}) > \varphi(t_{n+1}) + (Y(0) - \varphi(0))$ (otherwise the claim of the lemma holds automatically). Put
    \[
        \kappa(n):= \max\{k=0,1,...,n~|~\widehat Y(t_k) \le \varphi(t_k) + (Y(0) - \varphi(0)) \}
    \]
    and observe that $(t_k, \widehat Y(t_k)) \in \mathcal D_{Y(0) - \varphi(0)}$ for any $k=\kappa(n)+1,..., n+1$, where $\mathcal D_{Y(0) - \varphi(0)}$ is defined via \eqref{eq: definition of the set D}. Next, by \textbf{(A2)}, for any $y\in\mathcal D_{Y(0) - \varphi(0)}$
    \begin{align*}
        |b(t,y)| - \left|b\Big(t, \varphi(t) + (Y(0) - \varphi(0))\Big)\right| &\le \left|b(t,y) - b\Big(t, \varphi(t) + (Y(0) - \varphi(0))\Big)\right| 
        \\
        &\le \frac{c_1}{(Y(0) - \varphi(0))^p} |y - \varphi(t) - (\varphi(0) - Y(0))| 
        \\
        &\le \frac{c_1}{(Y(0) - \varphi(0))^p}|y| + \frac{c_1}{(Y(0) - \varphi(0))^p}|\varphi(t) + (\varphi(0) - Y(0))|,
    \end{align*}
    i.e. there exists a constant $C>0$ that does not depend on the partition such that
    \begin{equation}\label{proofeq: linear growth of b one sided}
        |b(t,y)| \le C +  \frac{c_1}{(Y(0) - \varphi(0))^p} |y|.
    \end{equation}
    Next, observe that, for any $k=\kappa(n)+1,..., n+1$, we have
    \begin{align*}
        \widehat Y(t_k) &= \widehat Y(t_{\kappa(n)}) + \sum_{i=\kappa(n) + 1}^k b(t_i, \widehat Y(t_i)) \Delta_N + Z(t_k) - Z(t_{\kappa(n)})
        \\
        &\le \varphi(t_{\kappa(n)}) + (Y(0) - \varphi(0)) + \sum_{i=\kappa(n) + 1}^k b(t_i, \widehat Y(t_i)) \Delta_N + \Lambda (t_k - t_{\kappa(n)})^\lambda
        \\
        &\le \left|\max_{s\in[0,T]} \varphi(s) + (Y(0) - \varphi(0))\right| + T^\lambda\Lambda + \sum_{i=\kappa(n) + 1}^k b(t_i, \widehat Y(t_i)) \Delta_N.
    \end{align*}
    Therefore, using \eqref{proofeq: linear growth of b one sided} and
    \[
        \widehat Y(t_k) > \varphi(t_k) \ge \min_{s\in[0,T]} \varphi(s),
    \]
    one can write
    \begin{align*}
        |\widehat Y(t_k)| & \le \left|\min_{s\in[0,T]} \varphi(s)\right| + \left|\max_{s\in[0,T]} \varphi(s) + (Y(0) - \varphi(0))\right| + T^\lambda\Lambda + \sum_{i=\kappa(n) + 1}^k |b(t_i, \widehat Y(t_i))| \Delta_N
        \\
        &\le \left|\min_{s\in[0,T]} \varphi(s)\right| + \left|\max_{s\in[0,T]} \varphi(s) + (Y(0) - \varphi(0))\right| + T^\lambda\Lambda +  C\sum_{i=\kappa(n) + 1}^k \Delta_N 
        \\
        &\qquad + \frac{c_1}{(Y(0) - \varphi(0))^p}\sum_{i=\kappa(n) + 1}^k  |\widehat Y(t_i)| \Delta_N,
    \end{align*}
    where $C>0$ is some positive constant that does not depend on the partition.
    
    Now we want to apply the discrete version of the Gronwall inequality from \cite[Lemma A.3]{Kruse_2014}. In order to do that, we observe that
    \begin{align*}
        |\widehat Y(t_{\kappa(n)+1})| &\le C + T^\lambda\Lambda + \frac{c_1}{(Y(0) - \varphi(0))^p} \Delta_N |\widehat Y(t_k)|,
    \end{align*}
    and, for any $k=\kappa(n)+2,..., n+1$,
    \begin{align*}
        |\widehat Y(t_k)| &\le C + T^\lambda\Lambda + \frac{c_1}{(Y(0) - \varphi(0))^p}\sum_{i=\kappa(n) + 1}^{k-1}  |\widehat Y(t_i)| \Delta_N + \frac{c_1}{(Y(0) - \varphi(0))^p} \Delta_N |\widehat Y(t_k)|.
    \end{align*}
    Now, since $\frac{c_1}{(Y(0) - \varphi(0))^p} \Delta_N < 1$, we can write that 
    \[
        \left( 1 - \frac{c_1}{(Y(0) - \varphi(0))^p} \Delta_N \right) |\widehat Y(t_{\kappa(n)+1})| \le C + T^\lambda\Lambda
    \]
    and, for all $k=\kappa(n)+2,..., n+1$,
    \[
        \left( 1 - \frac{c_1}{(Y(0) - \varphi(0))^p} \Delta_N \right) |\widehat Y(t_k)| \le C + T^\lambda\Lambda + \frac{c_1}{(Y(0) - \varphi(0))^p}\sum_{i=\kappa(n) + 1}^{k-1}  |\widehat Y(t_i)| \Delta_N.
    \]
    Put 
    \[
        N_0 := \min\left\{N\ge 1:~\frac{c_1 }{(Y(0) - \varphi(0))^p}\Delta_N < 1\right\} = \left[ \frac{T c_1}{(Y(0) - \varphi(0))^p} \right] + 1 
    \]
    with $[x]$ being the greatest integer less than or equal to $x$ and observe that, for all $N\ge N_0$,
    \[
        1 - \frac{c_1 }{(Y(0) - \varphi(0))^p}\Delta_N \ge 1 - \frac{c_1 }{(Y(0) - \varphi(0))^p}\Delta_{N_0}.
    \]
    Therefore, 
    \begin{align*}
        |\widehat Y(t_{\kappa(n)+1})| &\le \frac{C}{1 - \frac{c_1}{(Y(0) - \varphi(0))^p} \Delta_N} + \frac{T^\lambda}{1 - \frac{c_1}{(Y(0) - \varphi(0))^p} \Delta_N}\Lambda 
        \\
        & \le \frac{C}{1 - \frac{c_1}{(Y(0) - \varphi(0))^p} \Delta_{N_0}} + \frac{T^\lambda}{1 - \frac{c_1}{(Y(0) - \varphi(0))^p} \Delta_{N_0}}\Lambda
        \\
        & =: C_1 + C_2 \Lambda
    \end{align*}
    and, for all $k=\kappa(n)+2,..., n+1$,
    \begin{align*}
        |\widehat Y(t_k)| &\le \frac{C}{1 - \frac{c_1}{(Y(0) - \varphi(0))^p} \Delta_N} + \frac{T^\lambda}{1 - \frac{c_1}{(Y(0) - \varphi(0))^p} \Delta_N}\Lambda 
        \\
        &\quad+ \frac{c_1}{(Y(0) - \varphi(0))^p}\sum_{i=\kappa(n) + 1}^{k-1}  |\widehat Y(t_i)| \frac{\Delta_N}{1 - \frac{c_1}{(Y(0) - \varphi(0))^p} \Delta_N}
        \\
        & \le \frac{C}{1 - \frac{c_1}{(Y(0) - \varphi(0))^p} \Delta_{N_0}} + \frac{T^\lambda}{1 - \frac{c_1}{(Y(0) - \varphi(0))^p} \Delta_{N_0}}\Lambda 
        \\
        &\quad+ \frac{c_1}{(Y(0) - \varphi(0))^p}\sum_{i=\kappa(n) + 1}^{k-1}  |\widehat Y(t_i)| \frac{\Delta_N}{1 - \frac{c_1}{(Y(0) - \varphi(0))^p} \Delta_{N_0}}
        \\
        & =: C_1 + C_2 \Lambda + C_3 \sum_{i=\kappa(n) + 1}^{k-1}  |\widehat Y(t_i)| \Delta_N.
    \end{align*}
    Using a discrete version of the Gronwall inequality, we now obtain that for all $k=\kappa(n)+1,..., n+1$
    \begin{align*}
        |\widehat Y(t_k)| &\le \left(C_1 + C_2 \Lambda\right) \exp\left\{ C_3 \sum_{i=\kappa(n) + 1}^{k-1} \Delta_N \right\} \le \left(C_1 + C_2 \Lambda\right) \exp\left\{ T C_3 \right\}
        \\
        &=: L_3 + L_4 \Lambda.
    \end{align*}
    which ends the proof.
\end{proof}

\begin{remark}\label{rem: L1, L2 and alpha can be chosen jointly one sided}
    It is clear that constants $L_1$, $L_2$, $L_3$ and $L_4$ can be chosen jointly for $Y$ and $\widehat Y$, so that the inequalities
    \[
        \varphi(t) + \frac{L_1}{ ( L_2 + \Lambda )^{\alpha} } \le Y(t) \le L_3 + L_4 \Lambda, \quad t\in [0,T],
    \]
    and
    \[
        \varphi(t_n) + \frac{L_1}{ ( L_2 + \Lambda )^{\alpha} } \le \widehat Y(t_n) \le L_3 + L_4 \Lambda, \quad n = 0, 1,... ,N,
    \]
    hold simultaneously with probability 1. 
\end{remark}

Next, corresponding to Lemma \ref{lemma: Holder continuity of Y} in the two-sided case, $Y$ enjoys H\"older continuity with the H\"older constant being integrable provided that $\Lambda$ has moments of sufficiently high order. This is summarized in the lemma below.

\begin{lemma}\label{lemma: Holder continuity of Y one sided}
    Let $Z$ satisfy \textbf{(Z1)}--\textbf{(Z2)} and assumptions \textbf{(A1)}--\textbf{(A3)} hold. 
    \begin{itemize}
        \item[1)] There exists a positive random variable $\Upsilon$ such that with probability 1
        \[
            |Y(t) - Y(s)| \le \Upsilon |t-s|^\lambda, \quad t, s\in[0,T].
        \]
        \item[2)] If, for some $ r \ge 1$,
        \begin{equation}\label{eq: condition on Lambda to put expectation one sided}
            \mathbb E \left[\Lambda^{\frac{r(p + \gamma\lambda + \lambda - 1)}{\gamma\lambda + \lambda - 1}}\right] < \infty,
        \end{equation}
        where $\lambda$ and $\Lambda$ are from \textbf{(Z2)}, $p$ is from \textbf{(A2)} and $\gamma$ is from \textbf{(A3)}, then one can choose $\Upsilon$ such that
        \[
            \mathbb E [\Upsilon^{r}] < \infty.
        \]
    \end{itemize}
\end{lemma}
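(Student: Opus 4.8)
The plan is to run the argument of Lemma~\ref{lemma: Holder continuity of Y} essentially verbatim, the only structural difference being that here $Y$ is not bounded from above, so in place of the uniform bound on $|Y(u)-\phi(u)|$ used in the two-sided case we exploit the linear-in-$\Lambda$ upper estimate $Y(u)\le L_3+L_4\Lambda$ coming from \eqref{eq: upper and lower bounds for one sandwiched volatility}. First I would invoke Theorem~\ref{th: properties of sandwich}, 1): with probability $1$,
\[
    \varphi(t)+\frac{L_1}{(L_2+\Lambda)^{\frac{1}{\gamma\lambda+\lambda-1}}}\le Y(t)\le L_3+L_4\Lambda,\qquad t\in[0,T],
\]
so that $(t,Y(t))\in\mathcal D_{1/\xi}$ for all $t$, where $\xi:=(L_2+\Lambda)^{\frac{1}{\gamma\lambda+\lambda-1}}/L_1$ is the one-sided analogue of the quantity in \eqref{eq: distance to pieces of bread denoted by xi}. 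As in the construction of the constants in Lemma~\ref{lemma: bounds for discretized process}, after possibly replacing $L_1$ by a smaller positive constant we may assume $1/\xi<\min\{1,Y(0)-\varphi(0)\}$, so that $\varepsilon:=1/\xi$ is an admissible value in \textbf{(A2)} and the reference curve $u\mapsto r(u):=\varphi(u)+(Y(0)-\varphi(0))$ also lies in $\mathcal D_{1/\xi}$.

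Next I would estimate the drift along the trajectory. Writing $b(u,Y(u))=\bigl(b(u,Y(u))-b(u,r(u))\bigr)+b(u,r(u))$ and applying \textbf{(A2)} with $\varepsilon=1/\xi$ together with the upper bound on $Y$,
\[
    |b(u,Y(u))-b(u,r(u))|\le c_1\xi^{p}\,|Y(u)-r(u)|\le C\xi^{p}(1+\Lambda),
\]
while $|b(u,r(u))|\le\max_{v\in[0,T]}|b(v,r(v))|=:C$ since $b$ is continuous on $\mathcal D_0$ and $u\mapsto(u,r(u))$ traces a compact subset of $\mathcal D_0$; hence $|b(u,Y(u))|\le C\xi^{p}(1+\Lambda)$ (using that $\xi$ is bounded below by a positive constant). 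Then, exactly as in \eqref{proofeq: estimate for Holder cont two sided}, for $0\le s<t\le T$,
\[
    |Y(t)-Y(s)|\le\int_s^t|b(u,Y(u))|\,du+|Z(t)-Z(s)|\le C\xi^{p}(1+\Lambda)(t-s)+\Lambda(t-s)^{\lambda}\le C\xi^{p}(1+\Lambda)(t-s)^{\lambda},
\]
where the last step uses $(t-s)\le T^{1-\lambda}(t-s)^{\lambda}$. This proves part 1) with $\Upsilon:=C\xi^{p}(1+\Lambda)=C(L_2+\Lambda)^{\frac{p}{\gamma\lambda+\lambda-1}}(1+\Lambda)$. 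For part 2), $\Upsilon^{r}$ is dominated by a constant times a polynomial in $\Lambda$ of degree $\frac{rp}{\gamma\lambda+\lambda-1}+r=\frac{r(p+\gamma\lambda+\lambda-1)}{\gamma\lambda+\lambda-1}$, so $\mathbb E[\Upsilon^{r}]<\infty$ is guaranteed exactly by \eqref{eq: condition on Lambda to put expectation one sided}.

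The drift estimate, the passage from $(t-s)$ to $(t-s)^{\lambda}$, and the moment bookkeeping are all routine. The only point requiring genuine care — and the sole place where this proof departs from that of Lemma~\ref{lemma: Holder continuity of Y} — is that $|Y(u)-r(u)|$ is now controlled by $C(1+\Lambda)$ via the upper bound \eqref{eq: upper and lower bounds for one sandwiched volatility} rather than by a deterministic constant, and one must then track how the extra factor $(1+\Lambda)$ combines with $\xi^{p}$ to produce precisely the exponent $\frac{r(p+\gamma\lambda+\lambda-1)}{\gamma\lambda+\lambda-1}$ of $\Lambda$ appearing in the moment condition; a secondary technical check is that the reference curve $r$ genuinely lies in $\mathcal D_{1/\xi}$, which is ensured by the construction of $L_1$ and $L_2$.
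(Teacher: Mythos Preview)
Your proposal is correct and follows essentially the same route as the paper's proof: invoke the lower bound from \eqref{eq: upper and lower bounds for one sandwiched volatility} to place $(t,Y(t))$ in $\mathcal D_{1/\xi}$, split the drift via a reference curve staying in $\mathcal D_{1/\xi}$, apply \textbf{(A2)} with $\varepsilon=1/\xi$, and then use the upper bound $Y\le L_3+L_4\Lambda$ to control $|Y(u)-\text{reference}|$ by $C(1+\Lambda)$, yielding $\Upsilon$ of the form $C\xi^{p}(1+\Lambda)$. The only cosmetic difference is your choice of reference curve $r(u)=\varphi(u)+(Y(0)-\varphi(0))$ versus the paper's $\phi(u)=\varphi(u)+1$; both work, and your remark that one may need to shrink $L_1$ to ensure $1/\xi<\min\{1,Y(0)-\varphi(0)\}$ (so that $\varepsilon=1/\xi$ is admissible in \textbf{(A2)} and the reference curve lies in $\mathcal D_{1/\xi}$) is a point the paper glosses over.
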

\begin{proof}
    By \eqref{eq: upper and lower bounds for one sandwiched volatility}, 
    \[
        Y(t) \ge \varphi(t) + \frac{L_1}{ ( L_2 + \Lambda )^{\frac{1}{\gamma \lambda + \lambda - 1}} } \quad a.s.,
    \]
    i.e. with probability 1 $(t,Y(t)) \in \mathcal D_{\frac{1}{\xi}}$, $t\in[0,T]$, where
    \begin{equation}\label{eq: distance to pieces of bread denoted by xi one sided}
        \xi := \frac{( L_2 + \Lambda )^{\frac{1}{\gamma\lambda + \lambda - 1}}}{L_1}
    \end{equation}
    and $\mathcal D_{\frac 1 \xi}$ is defined in \eqref{eq: definition of the set D}. Denote $\phi(t) := \varphi(t) + 1$ and notice that $(t,\phi(t)) \in \mathcal D_{\frac{1}{\xi}}$, $t\in[0,T]$, since $\frac{1}{\xi} \le Y(0) - \varphi(0)$. Thus, using the same arguments as applied in \eqref{proofeq: estimate for Holder cont two sided}, we can write that, with probability 1, for any $0\le s < t \le T$:
    \[
        |Y(t) - Y(s)| \le c_1 \xi^p \int_s^t |Y(u) - \phi(u)| du + \max_{u\in[0,T]}|b(u, \phi(u))| (t-s) + \Lambda (t - s)^\lambda,
    \]
    where $c_1$ is from \textbf{(A2)}. Now, again by \eqref{eq: upper and lower bounds for one sandwiched volatility},
    \[
        Y(t) \le L_3 + L_4\Lambda \quad a.s., 
    \]
    hence with probability 1
    \begin{align*}
        |Y(t) - Y(s)| &\le c_1 \xi^p \int_s^t |Y(u) - \phi(u)| du + \max_{u\in[0,T]}|b(u, \phi(u))| (t-s) + \Lambda (t - s)^\lambda
        \\
        &\le c_1 \xi^p (L_3 + L_4\Lambda)(t-s) + c_1 \xi^p\max_{u\in[0,T]}|\phi(u)| (t-s) 
        \\
        &\quad+ \max_{u\in[0,T]}|b(u, \phi(u))| (t-s) + \Lambda (t - s)^\lambda
        \\
        &\le C(1+\xi^p\Lambda + \xi^p + \Lambda) (t-s)^\lambda,
    \end{align*}
    where $C$ is a positive constant. Now one can put
    \begin{equation}\label{eq: definition of Upsilon one sided}
        \Upsilon := C(1+\xi^p\Lambda + \xi^p + \Lambda)
    \end{equation}
    and observe that
    \[
        \mathbb E [\Upsilon^{r}] < \infty
    \]
    whenever \eqref{eq: condition on Lambda to put expectation one sided} holds.
\end{proof}

\begin{corollary}\label{rem: Holder approximations one sided}
    Using Lemma \ref{lemma: bounds for discretized process one sided} and following the proof of Lemma \ref{lemma: Holder continuity of Y one sided}, it is easy to obtain that, for any partition with the mesh satisfying
    \begin{equation}\label{eq: ultimate condition on partition one sided}
        \max\left\{c_3, \frac{c_1 }{(Y(0) - \varphi(0))^p}\right\} \Delta_N < 1
    \end{equation}
    there is a random variable $\Upsilon$ independent of the partition such that with probability 1
    \begin{equation}\label{eq: pseudo Holder continuity of discretised process one sided}
        |\widehat Y(t_k) - \widehat Y(t_n)| \le \Upsilon |t_k - t_n|^\lambda, \quad k,n = 0,...,N.
    \end{equation}
    Furthermore, just like in Lemma \ref{lemma: Holder continuity of Y}, for $r>0$
    \[
        \mathbb E [\Upsilon^r] < \infty
    \]
    provided that
    \[
        \mathbb E \left[\Lambda^{\frac{r(p + \gamma\lambda + \lambda - 1)}{\gamma\lambda + \lambda - 1}}\right] < \infty.
    \]
    Finally, such $\Upsilon$ can be chosen jointly for $Y$ and $\widehat Y$, so that
    \[
        |Y(t) - Y(s)| \le \Upsilon |t-s|^\lambda, \quad t,s\in[0,T],
    \]
    holds simultaneously with \eqref{eq: pseudo Holder continuity of discretised process one sided} with probability 1.
\end{corollary}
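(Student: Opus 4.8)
The plan is to mimic the proof of Lemma~\ref{lemma: Holder continuity of Y one sided}, replacing the integral representation of $Y$ by the telescoping sum coming from the recursion \eqref{eq: definition of backward Euler scheme one-sided} and the a.s.\ bounds \eqref{eq: upper and lower bounds for one sandwiched volatility} by their discrete counterparts from Lemma~\ref{lemma: bounds for discretized process one sided}. Fix $\omega\in\Omega$ for which $Z(\omega,\cdot)$ is $\lambda$-H\"older continuous and consider any partition whose mesh satisfies \eqref{eq: ultimate condition on partition one sided}; this condition simultaneously guarantees $c_3\Delta_N<1$, so that each $\widehat Y(t_k)$ is well defined, and $\frac{c_1}{(Y(0)-\varphi(0))^p}\Delta_N<1$, so that the upper bound of Lemma~\ref{lemma: bounds for discretized process one sided} is available. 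By that lemma there are partition-independent constants $L_1,L_2,L_3,L_4>0$ with
\[
    \varphi(t_n) + \frac{L_1}{(L_2+\Lambda)^{\frac{1}{\gamma\lambda+\lambda-1}}} \le \widehat Y(t_n) \le L_3 + L_4\Lambda, \qquad n=0,1,\dots,N,
\]
so that, with $\xi := (L_2+\Lambda)^{\frac{1}{\gamma\lambda+\lambda-1}}/L_1$ as in \eqref{eq: distance to pieces of bread denoted by xi one sided}, we have $(t_n,\widehat Y(t_n))\in\mathcal D_{\frac 1\xi}$ for every $n$.

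Next I would bound the drift along the discretized trajectory. With $\phi(t):=\varphi(t)+1$, which lies in $\mathcal D_{\frac 1\xi}$ exactly as in the proof of Lemma~\ref{lemma: Holder continuity of Y one sided}, assumption \textbf{(A2)} gives
\[
    |b(t_i,\widehat Y(t_i))| \le |b(t_i,\widehat Y(t_i)) - b(t_i,\phi(t_i))| + |b(t_i,\phi(t_i))| \le c_1\xi^p|\widehat Y(t_i)-\phi(t_i)| + \max_{u\in[0,T]}|b(u,\phi(u))|,
\]
and, since $|\widehat Y(t_i)-\phi(t_i)|\le L_3+L_4\Lambda+\max_{u\in[0,T]}|\phi(u)|$ by the displayed bound, we get $|b(t_i,\widehat Y(t_i))|\le C(1+\xi^p+\xi^p\Lambda)$ for a partition-independent constant $C$. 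Telescoping the recursion, for $0\le t_n< t_k\le T$ (the case $t_k<t_n$ being symmetric and $t_k=t_n$ trivial),
\[
    |\widehat Y(t_k)-\widehat Y(t_n)| \le \sum_{i=n+1}^{k}|b(t_i,\widehat Y(t_i))|\Delta_N + |Z(t_k)-Z(t_n)| \le C(1+\xi^p+\xi^p\Lambda)(t_k-t_n) + \Lambda(t_k-t_n)^\lambda,
\]
and using $(t_k-t_n)\le T^{1-\lambda}(t_k-t_n)^\lambda$ one obtains \eqref{eq: pseudo Holder continuity of discretised process one sided} with $\Upsilon := C(1+\xi^p\Lambda+\xi^p+\Lambda)$, which is of the same shape as \eqref{eq: definition of Upsilon one sided} and partition-independent because $L_1,\dots,L_4$ are. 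Since $\xi^p = (L_2+\Lambda)^{p/(\gamma\lambda+\lambda-1)}/L_1^p$, the term $\xi^p\Lambda$ is a power of $L_2+\Lambda$ of order $\frac{p}{\gamma\lambda+\lambda-1}+1 = \frac{p+\gamma\lambda+\lambda-1}{\gamma\lambda+\lambda-1}$, so $\mathbb E[\Upsilon^r]<\infty$ follows from $\mathbb E[\Lambda^{\frac{r(p+\gamma\lambda+\lambda-1)}{\gamma\lambda+\lambda-1}}]<\infty$ exactly as in Lemma~\ref{lemma: Holder continuity of Y one sided}.

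For the joint statement I would invoke Remark~\ref{rem: L1, L2 and alpha can be chosen jointly one sided}: the bounds on $Y$ from \eqref{eq: upper and lower bounds for one sandwiched volatility} and those on $\widehat Y$ from Lemma~\ref{lemma: bounds for discretized process one sided} may be arranged to share the constants $L_1,L_2,L_3,L_4$, so a single $\xi$, hence a single $\Upsilon$ (after possibly enlarging $C$), serves in both \eqref{eq: pseudo Holder continuity of discretised process one sided} and the H\"older estimate for $Y$. The argument is otherwise a routine transcription of the continuous-time proof; the only point needing attention is carrying both halves of the mesh condition \eqref{eq: ultimate condition on partition one sided} through the estimates, since one half is needed for the well-posedness of $\widehat Y$ and the other for the upper bound in Lemma~\ref{lemma: bounds for discretized process one sided}.
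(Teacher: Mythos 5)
Your proof is correct and takes exactly the route the paper indicates (transcribing the continuous-time argument of Lemma \ref{lemma: Holder continuity of Y one sided} to the discrete recursion, with the a.s.\ bounds on $\widehat Y$ supplied by Lemma \ref{lemma: bounds for discretized process one sided}, and the joint choice of constants supplied by Remark \ref{rem: L1, L2 and alpha can be chosen jointly one sided}). One minor imprecision: the stated intermediate bound $|\widehat Y(t_i)-\phi(t_i)|\le L_3+L_4\Lambda+\max_u|\phi(u)|$ is not literally correct when $\varphi$ takes negative values, but since $\varphi(t_i)<\widehat Y(t_i)\le L_3+L_4\Lambda$ one still gets $|\widehat Y(t_i)-\phi(t_i)|\le C(1+\Lambda)$ for a partition-independent $C$, which is all that is used, so the conclusion and the exponent $\frac{r(p+\gamma\lambda+\lambda-1)}{\gamma\lambda+\lambda-1}$ are unaffected.
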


\begin{lemma}\label{lemma: backward approximations in discrete time points one sided}

    Let $Z$ satisfy \textbf{(Z1)}--\textbf{(Z2)}, Assumptions \ref{assum: A} hold and the mesh of the partition $\Delta_N$ satisfy \eqref{eq: condition on the mesh one-sided}.
    \begin{itemize}
        \item[1)] For any $r\ge 1$, there exists a positive random variable $\mathcal C_4$ that does not depend on the partition such that
        \[
            \sup_{k=0,1,...,N} |Y(t_k) - \widehat{Y}(t_k)|^r \le \mathcal C_4 \Delta_N^{\lambda r}  \quad a.s. 
        \]
        \item[2)] If, additionally,
        \begin{equation}\label{eq: condition on moments of Lambda one sided}
            \mathbb E\left[ \Lambda^{\frac{r(2p+ \gamma\lambda + \lambda - 1)}{\gamma\lambda + \lambda -1}} \right] < \infty,
        \end{equation}
        where $\lambda$ and $\Lambda$ are from \textbf{(Z2)}, $p$ is from \textbf{(A2)} and $\gamma$ is from \textbf{(A3)}, then one can choose $\mathcal C_4$ such that  $\mathbb E [\mathcal C_4] < \infty$, i.e. there exists a deterministic constant $C$ that does not depend on the partition such that
        \[
            \mathbb E\left[\sup_{k=0,1,...,N} |Y(t_k) - \widehat{Y}(t_k)|^r \right] \le C\Delta_N^{\lambda r}.
        \]
    \end{itemize}
\end{lemma}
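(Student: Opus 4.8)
The plan is to transcribe the proof of Lemma \ref{lemma: backward approximations in discrete time points} essentially verbatim, the only structural change being that we now estimate the distance of $(t,Y(t))$ to $\varphi$ via \eqref{eq: upper and lower bounds for one sandwiched volatility} instead of using two-sided boundedness. Fix $\omega\in\Omega$ for which $Z$ is H\"older continuous, write $e_n:=Y(t_n)-\widehat Y(t_n)$ and $\Delta Z_n:=Z(t_n)-Z(t_{n-1})$, and subtract \eqref{eq: definition of backward Euler scheme one-sided} from \eqref{eq: sandwiched SDE, general form} evaluated on the grid to obtain, exactly as in \eqref{eq: IE error estimate},
\[
    e_n = e_{n-1} + \big(b(t_n,Y(t_n)) - b(t_n,\widehat Y(t_n))\big)\Delta_N + \int_{t_{n-1}}^{t_n}\big(b(s,Y(s)) - b(t_n,Y(t_n))\big)\,ds .
\]
By Remark \ref{rem: L1, L2 and alpha can be chosen jointly one sided} both $Y(t_n)$ and $\widehat Y(t_n)$ lie strictly above $\varphi(t_n)$, so the segment joining them is contained in $\mathcal D_0$; hence the mean value theorem applies and, since $\tfrac{\partial b}{\partial y}<c_3$ by \textbf{(A4)} and $c_3\Delta_N<1$ by \eqref{eq: condition on the mesh one-sided}, it produces a factor $\tfrac{\partial b}{\partial y}(t_n,\Theta_n)$ with $1-\tfrac{\partial b}{\partial y}(t_n,\Theta_n)\Delta_N>1-c_3\Delta_N>0$. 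This yields the recursion \eqref{eq: IE error estimate 2} with $\Theta_n\in(Y(t_n)\wedge\widehat Y(t_n),\,Y(t_n)\vee\widehat Y(t_n))$.

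Next I would introduce the products $\zeta_0:=1$, $\zeta_n:=\prod_{i=1}^n\big(1-\tfrac{\partial b}{\partial y}(t_i,\Theta_i)\Delta_N\big)$, set $\tilde e_n:=\zeta_n e_n$, telescope using $\tilde e_0=0$, and divide by $\zeta_n$ to get
\[
    e_n = \sum_{i=1}^n \frac{\zeta_{i-1}}{\zeta_n}\int_{t_{i-1}}^{t_i}\big(b(s,Y(s)) - b(t_i,Y(t_i))\big)\,ds ,
\]
with $\zeta_{i-1}/\zeta_n\le(1-c_3\Delta_N)^{-N}\to e^{c_3T}$, hence $\le C$ uniformly in $i,n,N$ — this step uses only \textbf{(A4)} and \eqref{eq: condition on the mesh one-sided} and is identical to the two-sided case. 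To bound the integrand, note that by \eqref{eq: upper and lower bounds for one sandwiched volatility} one has $(t,Y(t))\in\mathcal D_{1/\xi}$ with $\xi$ as in \eqref{eq: distance to pieces of bread denoted by xi one sided}, so assumption \textbf{(A2)} applied with $\varepsilon=1/\xi$ together with Lemma \ref{lemma: Holder continuity of Y one sided} gives
\[
    |b(s,Y(s)) - b(t_i,Y(t_i))| \le c_1\xi^p\big(|Y(s)-Y(t_i)| + |s-t_i|^\lambda\big) \le c_1\xi^p(\Upsilon+1)|s-t_i|^\lambda .
\]
Inserting this and summing over $i$ as in the two-sided proof produces $|e_n|^r\le C\xi^{pr}(1+\Upsilon^r)\Delta_N^{\lambda r}$; as the right-hand side does not depend on $n$ or $N$, part 1) follows with $\mathcal C_4:=C\xi^{pr}(1+\Upsilon^r)$.

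For part 2) it remains to verify $\mathbb E[\mathcal C_4]<\infty$ under \eqref{eq: condition on moments of Lambda one sided}. Using the explicit forms \eqref{eq: distance to pieces of bread denoted by xi one sided} of $\xi$ and \eqref{eq: definition of Upsilon one sided} of $\Upsilon$, the leading power of $\Lambda$ in $\xi^{pr}(1+\Upsilon^r)$ comes from $\xi^{pr}\Upsilon^r$ and equals $\Lambda^{r(2p+\gamma\lambda+\lambda-1)/(\gamma\lambda+\lambda-1)}$; the extra $p$ in the exponent compared with the two-sided estimate \eqref{eq: condition on moments of Lambda} is precisely the contribution of the term $\xi^p\Lambda$ in $\Upsilon$, which replaces the two-sided $\xi^p+\Lambda$ because $Y$ no longer has a deterministic-shape upper bound. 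Thus \eqref{eq: condition on moments of Lambda one sided} gives $\mathbb E[\mathcal C_4]<\infty$ and hence the claimed bound on $\mathbb E[\sup_{k}|Y(t_k)-\widehat Y(t_k)|^r]$. I do not expect a genuine obstacle here: the argument is a transcription of Lemma \ref{lemma: backward approximations in discrete time points}, and the only new points requiring care are checking that the mean value theorem is legitimately applied (which is what Remark \ref{rem: L1, L2 and alpha can be chosen jointly one sided} guarantees) and correctly tracking the higher power of $\Lambda$ now carried by $\Upsilon$.
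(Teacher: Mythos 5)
Your proof is correct and follows essentially the same route as the paper: the paper's own proof simply says ``Following the proof of Lemma \ref{lemma: backward approximations in discrete time points}, one can easily obtain$\ldots$'' and then writes down the bound $\sup_n |Y(t_n)-\widehat Y(t_n)|^r \le C\xi^{pr}(1+\Upsilon^r)\Delta_N^{\lambda r}$ exactly as you do, invoking \eqref{eq: distance to pieces of bread denoted by xi one sided}, \eqref{eq: definition of Upsilon one sided} and \eqref{eq: condition on moments of Lambda one sided} for the integrability of $\mathcal C_4$. Your bookkeeping of the leading power of $\Lambda$ (the extra $p$ coming from the $\xi^p\Lambda$ term in the one-sided $\Upsilon$) is also exactly right.
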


\begin{proof}
    Following the proof of Lemma \ref{lemma: backward approximations in discrete time points}, one can easily obtain that for any $n=0,1,...,N$
    \[
        |Y(t_n) - \widehat Y(t_n)| \le C\left(\sum_{i=1}^n \int_{t_{i-1}}^{t_{i}}\left| b(s,Y(s)) - b(t_i, Y(t_i))\right|  ds\right)^r.
    \]
    Next, note that $(t, Y(t)) \in \mathcal D_{\frac 1 \xi}$, where $\xi$ is defined by \eqref{eq: distance to pieces of bread denoted by xi one sided}, so, by \textbf{(A2)} and Lemma \ref{lemma: Holder continuity of Y one sided},
    \begin{align*}
        &\left(\sum_{i=1}^n \int_{t_{i-1}}^{t_{i}}\left| b(s,Y(s)) - b(t_i, Y(t_i))\right|  ds\right)^r
        \\
        & \le C\xi^{pr}\left(\sum_{i=1}^n \int_{t_{i-1}}^{t_{i}}\left| s - t_i \right|^\lambda  ds\right)^r + C\xi^{pr} \left(\sum_{i=1}^n \int_{t_{i-1}}^{t_{i}}\left| Y(s) - Y(t_i)\right|  ds\right)^r
        \\
        &\le C\xi^{pr}\left(\sum_{i=1}^n \int_{t_{i-1}}^{t_{i}}\left| s - t_i \right|^\lambda  ds\right)^r + C\xi^{pr}\Upsilon^r \left(\sum_{i=1}^n \int_{t_{i-1}}^{t_{i}}\left| s - t_i\right|^\lambda  ds\right)^r
        \\
        & = C\xi^{pr}(1+\Upsilon^r)\left(\sum_{i=1}^n \frac{1}{(1+\lambda)} \Delta_N^{1+\lambda} \right)^r
        \\
        & \le C\xi^{pr}(1+ \Upsilon^r) \Delta_N^{\lambda r},
    \end{align*}
    i.e.
    \begin{equation}\label{eq: estimation for distance between Y and hatY one sided}
        \sup_{n = 0,...,N}|Y(t_n) - \widehat Y(t_n)|^r \le C\xi^{pr}(1+\Upsilon^r) \Delta_N^{\lambda r}.
    \end{equation}
    In order to conclude the proof, it remains to notice that \eqref{eq: distance to pieces of bread denoted by xi one sided}, \eqref{eq: definition of Upsilon one sided} and \eqref{eq: condition on moments of Lambda one sided} imply that
    \[
        \mathbb E\left[\xi^{pr}(1+\Upsilon^r)\right] < \infty.
    \]
\end{proof}

Now we are ready to formulate the two main results of this section.

\begin{theorem}\label{th: main result for one sided sandwich}
    Let $Z$ satisfy \textbf{(Z1)}--\textbf{(Z2)}, Assumptions \ref{assum: A} hold and the mesh of the partition $\Delta_N$ satisfy \eqref{eq: ultimate condition on partition one sided}.
    \begin{itemize}
        \item[1)] For any $r\ge 1$, there exists a random variable $\mathcal C_5$ that does not depend on the partition such that
        \[
            \sup_{t\in[0,T]} |Y(t) - \widehat{Y}(t)|^r \le \mathcal C_5 \Delta_N^{\lambda r} \quad a.s.
        \]
        \item[2)] If, additionally,
        \begin{equation*}
            \mathbb E\left[ \Lambda^{\frac{r(2p+\gamma\lambda + \lambda - 1)}{\gamma\lambda + \lambda -1}} \right] < \infty,
        \end{equation*}
        where $\lambda$ and $\Lambda$ are from \textbf{(Z2)}, $p$ is from \textbf{(A2)} and $\gamma$ is from \textbf{(A3)}, then one can choose $\mathcal C_5$ such that $\mathbb E[\mathcal C_5] < \infty$, i.e. there exists a deterministic constant $C$ that does not depend on the partition such that
        \[
            \mathbb E\left[ \sup_{t\in[0,T]} |Y(t) - \widehat{Y}(t)|^r \right] \le C\Delta_N^{\lambda r}.
        \]
    \end{itemize}
\end{theorem}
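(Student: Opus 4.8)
The plan is to mimic the proof of Theorem~\ref{th: main result for double sandwich} line by line, replacing the two-sided ingredients by their one-sided analogues: Lemma~\ref{lemma: Holder continuity of Y one sided} for the Hölder continuity of $Y$, Corollary~\ref{rem: Holder approximations one sided} for that of $\widehat Y$, and the discrete-time estimate \eqref{eq: estimation for distance between Y and hatY one sided} from Lemma~\ref{lemma: backward approximations in discrete time points one sided}.

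First I would fix $\omega\in\Omega$ on which $Z$ has Hölder continuous paths and an arbitrary $t\in[0,T]$, and set $n(t):=\max\{n=0,1,\dots,N~|~t\ge t_n\}$, so that $t\in[t_{n(t)},t_{n(t)+1})$. Using $|a+b|^r\le C(|a|^r+|b|^r)$ I would split
\[
|Y(t)-\widehat Y(t)|^r\le C|Y(t)-Y(t_{n(t)})|^r+C|Y(t_{n(t)})-\widehat Y(t_{n(t)})|^r.
\]
The first summand is handled by Lemma~\ref{lemma: Holder continuity of Y one sided}, which gives $|Y(t)-Y(t_{n(t)})|^r\le\Upsilon^r(t-t_{n(t)})^{\lambda r}\le\Upsilon^r\Delta_N^{\lambda r}$; the second summand is exactly what \eqref{eq: estimation for distance between Y and hatY one sided} bounds, namely by $C\xi^{pr}(1+\Upsilon^r)\Delta_N^{\lambda r}$ with $\xi$ as in \eqref{eq: distance to pieces of bread denoted by xi one sided}. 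Since the resulting bound depends neither on $t$ nor on $N$, taking the supremum over $t\in[0,T]$ yields part~1) with
\[
\mathcal C_5:=C\bigl(\Upsilon^r+\xi^{pr}(1+\Upsilon^r)\bigr).
\]

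For part~2) the only task is to track $\Lambda$-moments. By \eqref{eq: distance to pieces of bread denoted by xi one sided}, $\xi^{pr}$ equals, up to a deterministic constant, $(L_2+\Lambda)^{rp/(\gamma\lambda+\lambda-1)}$, while by \eqref{eq: definition of Upsilon one sided} the dominant term of $\Upsilon^r$ behaves like $\Lambda^{r(p+\gamma\lambda+\lambda-1)/(\gamma\lambda+\lambda-1)}$; multiplying, $\xi^{pr}\Upsilon^r$ carries $\Lambda$ to the power $r(2p+\gamma\lambda+\lambda-1)/(\gamma\lambda+\lambda-1)$, which is precisely the exponent in the moment hypothesis (and this exponent dominates the weaker one needed for $\mathbb E[\Upsilon^r]<\infty$). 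Hence the hypothesis gives $\mathbb E[\mathcal C_5]<\infty$, and the claimed $L^r(\Omega;L^\infty([0,T]))$-rate $\Delta_N^{\lambda r}$ follows.

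There is no substantial obstacle here, since all the analytic work has already been carried out in the preceding lemmas; the only points requiring care are the mesh restriction and the moment bookkeeping. Specifically, to invoke the upper bound on $\widehat Y$ in Lemma~\ref{lemma: bounds for discretized process one sided} and the discrete Hölder estimate in Corollary~\ref{rem: Holder approximations one sided} one needs the stronger mesh condition \eqref{eq: ultimate condition on partition one sided}, not merely \eqref{eq: condition on the mesh one-sided}, which is why it is imposed in the hypotheses; and the exponent of $\Lambda$ is heavier than in the two-sided case because here $\Upsilon$ contains the extra product term $\xi^p\Lambda$, coming from the unbounded upper bound $L_3+L_4\Lambda$, that was absent when $Y$ was trapped between $\varphi$ and $\psi$.
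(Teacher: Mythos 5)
Your proof is correct and follows exactly the route the paper indicates: the paper's own proof reads simply that it is ``similar to the one of Theorem~\ref{th: main result for double sandwich} but instead of Lemmas~\ref{lemma: Holder continuity of Y}, \ref{lemma: backward approximations in discrete time points} and bound~\eqref{eq: estimation for distance between Y and hatY} one should apply Lemmas~\ref{lemma: Holder continuity of Y one sided}, \ref{lemma: backward approximations in discrete time points one sided} and bound~\eqref{eq: estimation for distance between Y and hatY one sided},'' which is precisely the decomposition and moment bookkeeping you carry out. One small inaccuracy in your closing remark: neither Corollary~\ref{rem: Holder approximations one sided} nor the upper bound on $\widehat Y$ is actually invoked in this argument (since $\widehat Y(t)=\widehat Y(t_{n(t)})$, only the continuous-time H\"older estimate for $Y$ and the discrete-time bound~\eqref{eq: estimation for distance between Y and hatY one sided} enter), so the stronger mesh condition~\eqref{eq: ultimate condition on partition one sided} is not strictly needed by this proof.
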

\begin{proof}
    The proof is similar to the one of Theorem \ref{th: main result for double sandwich} but instead of Lemmas \ref{lemma: Holder continuity of Y}, \ref{lemma: backward approximations in discrete time points} and bound \eqref{eq: estimation for distance between Y and hatY} one should apply Lemmas \ref{lemma: Holder continuity of Y one sided}, \ref{lemma: backward approximations in discrete time points one sided} and bound \eqref{eq: estimation for distance between Y and hatY one sided}.
\end{proof}

\begin{theorem}
    Let $Z$ satisfy \textbf{(Z1)}--\textbf{(Z2)}, Assumptions \ref{assum: A} hold and the mesh of the partition $\Delta_N$ satisfy \eqref{eq: ultimate condition on partition one sided}. 
    \begin{itemize}
        \item[1)] For any $r\ge 1$, there exists a random variable $\mathcal C_6$ that does not depend on the partition such that
        \[
            \sup_{n = 0,1,...,N}\left|\frac{1}{Y(t_n) - \varphi(t_n)} - \frac{1}{\widehat{Y}(t_n) - \varphi(t_n)}\right|^r \le \mathcal C_6 \Delta_N^{\lambda r} \quad a.s.
        \]

        \item[2)] If, additionally, 
        \begin{equation}\label{eq: condition on moments of Lambda one sided 2}
            \mathbb E\left[ \Lambda^{\frac{r(2+2p+ \gamma\lambda + \lambda - 1)}{\gamma\lambda + \lambda -1}}  \right] < \infty,
        \end{equation}
        where $\lambda$ and $\Lambda$ are from \textbf{(Z2)}, $p$ is from \textbf{(A2)} and $\gamma$ is from \textbf{(A3)}, then one can choose $\mathcal C_6$ such that $\mathbb E[\mathcal C_6] < \infty$, i.e. there exists a deterministic constant $C$ that does not depend on the partition such that
        \[
            \mathbb E\left[ \sup_{n = 0,1,...,N}\left|\frac{1}{Y(t_n) - \varphi(t_n)} - \frac{1}{\widehat{Y}(t_n) - \varphi(t_n)}\right|^r \right] \le C\Delta_N^{\lambda r}.
        \]
    \end{itemize}
\end{theorem}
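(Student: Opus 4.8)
The plan is to transcribe the proof of Theorem \ref{th: approx of Y-1} almost line for line, the one-sided setting being in fact slightly simpler because only the bound near $\varphi$ has to be controlled (there is no $\psi$-side). Write $\alpha := \frac{1}{\gamma\lambda + \lambda - 1}$. For each $n=0,1,\dots,N$ I would start from the elementary identity
\[
    \left|\frac{1}{Y(t_n) - \varphi(t_n)} - \frac{1}{\widehat{Y}(t_n) - \varphi(t_n)}\right|^r
    = \frac{|Y(t_n) - \widehat Y(t_n)|^r}{(Y(t_n) - \varphi(t_n))^r\,(\widehat Y(t_n) - \varphi(t_n))^r},
\]
and bound numerator and denominator separately. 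The numerator is exactly what Lemma \ref{lemma: backward approximations in discrete time points one sided} controls: by \eqref{eq: estimation for distance between Y and hatY one sided}, $\sup_{n}|Y(t_n) - \widehat Y(t_n)|^r \le C\xi^{pr}(1+\Upsilon^r)\Delta_N^{\lambda r}$ with $\xi$ from \eqref{eq: distance to pieces of bread denoted by xi one sided} and $\Upsilon$ from \eqref{eq: definition of Upsilon one sided}. For the denominator I would invoke Remark \ref{rem: L1, L2 and alpha can be chosen jointly one sided}: with $L_1, L_2$ chosen jointly for $Y$ and $\widehat Y$, one has $Y(t_n) - \varphi(t_n) \ge L_1(L_2+\Lambda)^{-\alpha}$ and $\widehat Y(t_n) - \varphi(t_n) \ge L_1(L_2+\Lambda)^{-\alpha}$ simultaneously a.s., so the quotient is at most $L_1^{-2r}(L_2+\Lambda)^{2r\alpha}|Y(t_n)-\widehat Y(t_n)|^r$. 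Combining the two bounds yields, a.s.,
\[
    \sup_{n=0,\dots,N}\left|\frac{1}{Y(t_n) - \varphi(t_n)} - \frac{1}{\widehat{Y}(t_n) - \varphi(t_n)}\right|^r
    \le C(L_2+\Lambda)^{2r\alpha}\,\xi^{pr}(1+\Upsilon^r)\,\Delta_N^{\lambda r} =: \mathcal C_6\,\Delta_N^{\lambda r},
\]
which is part 1).

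For part 2) the only remaining task is to verify that \eqref{eq: condition on moments of Lambda one sided 2} makes $\mathbb E[\mathcal C_6]<\infty$, and this is a bookkeeping of powers of $\Lambda$. From \eqref{eq: distance to pieces of bread denoted by xi one sided}, $\xi \le C(1+\Lambda^\alpha)$, so $\xi^{pr}$ contributes the power $\Lambda^{pr\alpha}$; from \eqref{eq: definition of Upsilon one sided}, the dominant term of $\Upsilon$ is $\xi^p\Lambda$, so $\Upsilon^r$ contributes $\Lambda^{r(p\alpha+1)}$; and $(L_2+\Lambda)^{2r\alpha}$ contributes $\Lambda^{2r\alpha}$. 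Multiplying, $\mathcal C_6 \le C\big(1 + \Lambda^{r(2\alpha + 2p\alpha + 1)}\big)$ with $r(2\alpha + 2p\alpha + 1) = \frac{r(2 + 2p + \gamma\lambda + \lambda - 1)}{\gamma\lambda + \lambda - 1}$, which is precisely the exponent in \eqref{eq: condition on moments of Lambda one sided 2}. Taking expectations then gives $\mathbb E\big[\sup_n|\cdots|^r\big] \le C\Delta_N^{\lambda r}$.

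I do not expect a genuine obstacle: the argument is a direct transcription of the two-sided proof. The two points that need (mild) attention are that the lower bounds of $Y$ and $\widehat Y$ near $\varphi$ must be used with the \emph{same} pair of constants $L_1, L_2$ — supplied by Remark \ref{rem: L1, L2 and alpha can be chosen jointly one sided} — and that the accounting of $\Lambda$-exponents must reproduce the stated moment condition, the governing growth rates being $\xi \sim \Lambda^{\alpha}$ and $\Upsilon \sim \Lambda^{p\alpha + 1}$.
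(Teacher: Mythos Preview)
Your proposal is correct and follows exactly the approach the paper indicates: the paper's own proof reads in its entirety ``The proof is similar to Theorem \ref{th: approx of Y-1} and is omitted,'' and your transcription of that argument---using Remark \ref{rem: L1, L2 and alpha can be chosen jointly one sided} for the joint lower bounds, the estimate \eqref{eq: estimation for distance between Y and hatY one sided} for the numerator, and then tracking the $\Lambda$-powers via \eqref{eq: distance to pieces of bread denoted by xi one sided} and \eqref{eq: definition of Upsilon one sided}---is precisely what is intended. The exponent bookkeeping is correct.
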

\begin{proof}
    The proof is similar to Theorem \ref{th: approx of Y-1} and is omitted.
\end{proof}

\section{Examples and simulations}\label{sec: examples}

The algorithms presented in \eqref{eq: definition of backward Euler scheme} and \eqref{eq: definition of backward Euler scheme one-sided} imply that, in order to generate $\widehat Y(t_{n+1})$, one has to solve an equation that potentially can be challenging from the computational point of view. However, in some cases that are relevant for applications this equation has a simple explicit solution.

Regarding the numerical examples that follow, we remark that:
\begin{itemize}
    \item[1)] all the simulations are performed in the \textsf{R} programming language on the system with Intel Core i9-9900K CPU and 64 Gb RAM;
        
    \item[2)] in order to simulate paths of fractional Brownian motion, \textsf{R} package \textsf{somebm} is used;
        
    \item[3)] in Example \ref{ex: sandwich mBm sim}, discrete samples of the multifractional Brownian motion (mBm) values are simulated using the Cholesky decomposition of the corresponding covariance matrix (for covariance structure of the mBm, see e.g. \cite[Proposition 4]{ACLV2002}) and the \textsf{R} package \textsf{nleqslv} is used for solving \eqref{eq: definition of backward Euler scheme} numerically.
\end{itemize}

\begin{example}\label{ex: CIR sim}{(\emph{Generalized CIR processes})}
    Let $\varphi \equiv 0$, $Z$ satisfy \textbf{(Z1)}--\textbf{(Z2)} with $\lambda \in\left(\frac{1}{2},1\right)$, $Y(0)$, $\kappa_1$, $\kappa_2 >0$, $\gamma > \frac{1}{\lambda} - 1$ be given and $\{Y(t),~t\in[0,T]\}$ satisfy the SDE of the form
    \begin{equation}\label{eq: CIR sim section}
        Y(t) = Y(0) + \int_0^t \left(\frac{\kappa_1}{Y(s)} - \kappa_2 Y(s)\right)ds + Z(t), \quad t\in[0,T].
    \end{equation}
    This process fits into the framework of Section \ref{sec: one-sided case} and the equation for $\hat Y(t_{k+1})$ from \eqref{eq: definition of backward Euler scheme one-sided} reads as follows:
    \[
        \widehat Y(t_{k+1}) = \widehat Y(t_{k}) + \left(\frac{\kappa_1}{\widehat Y(t_{k+1})} - \kappa_2 \widehat Y(t_{k+1})\right)\Delta_N + Z(t_{k+1}) - Z(t_k).
    \]
    It is easy to see that it has a unique positive solution
    \[
        \widehat Y(t_{k+1}) = \frac{ \widehat Y(t_{k}) + (Z(t_{k+1}) - Z(t_k)) + \sqrt{ \Big(\widehat Y(t_{k}) + (Z(t_{k+1}) - Z(t_k))\Big)^2 + 4\kappa_1\Delta_N(1+\kappa_2\Delta_N) } }{ 2(1+\kappa_2\Delta_N) }.
    \]
    
    Fig. \ref{fig1} contains 10 sample paths of the process \eqref{eq: CIR sim section} driven by a fractional Brownian motion with $H=0.7$. In all simulation we take $N = 10000$, $T=1$ and $Y(0) = 1 = \kappa_1 = \kappa_2 =1$. Based on 10000 simulations, the average time for simulating one path is 0.005388308 seconds.
    
    \begin{figure}[h!]
        \centering
        \includegraphics[width = 0.7\textwidth]{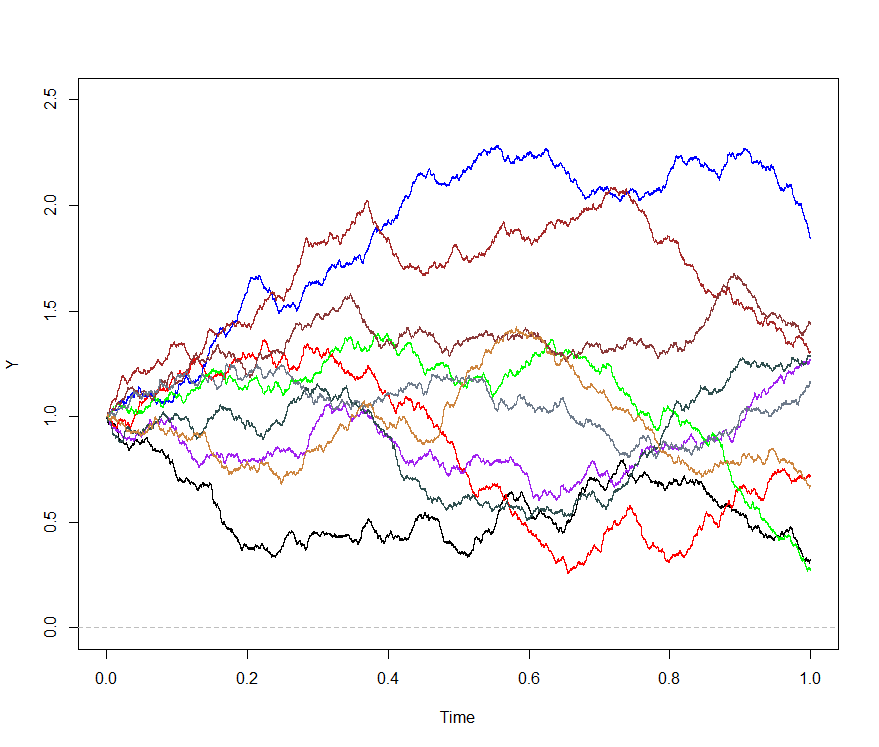}
        \caption{Ten sample paths of \eqref{eq: CIR sim section} generated using the backward Euler approximation scheme; $N = 10000$, $T=1$, $Y(0) = 1 = \kappa_1 = \kappa_2 =1$, $Z$ is a fractional Brownian motion with $H = 0.7$.}
        \label{fig1}
    \end{figure}

    Note that the drift-implicit Euler scheme for \eqref{eq: CIR sim section} driven by the fractional Brownian motion was the main subject of \cite{Hong2020} and \cite{ZhYu2020} but in both these works the convergence of $\hat Y$ to $Y$ is established only on $[0,T]$ with $T$ being small (see e.g.  \cite[Eq.  (8) and Remark 3.1]{Hong2020}). Our results fill this gap and convergence holds on arbitrary $[0,T]$ for any model parameters.
\end{example}

\begin{example}\label{ex: TSB sim}{\textit{(Sandwiched process of the TSB type)}}
    Consider a sandwiched SDE of the form
    \begin{equation}\label{eq: TSB sim}
        Y(t) = Y(0) + \int_0^t \left( \frac{\kappa_1}{Y(s) - \varphi(s)} - \frac{\kappa_2}{\psi(s) - Y(s)} - \kappa_3 Y(s) \right)ds + Z(t),\quad t\in[0,T],
    \end{equation}
    where $Z$ satisfies \textbf{(Z1)}--\textbf{(Z2)} with $\lambda \in \left(\frac{1}{2}, 1\right)$. This equation fits into the framework of Section \ref{sec: one-sided case} and the scheme \eqref{eq: definition of backward Euler scheme} leads to $N$ cubic equations of the form
    \begin{equation}\label{eq: cubic equation for simulation}
        \widehat Y^3(t_{n+1}) + B_{2, n} \widehat Y^2(t_{n+1}) + B_{1, n} \widehat Y(t_{n+1}) + B_{0, n} = 0, \quad n=0,...,N-1,
    \end{equation}
    where
    \begin{align*}
        B_{0, n} &:= \frac{-\varphi(t_{n+1}) \psi(t_{n+1})\left( \widehat Y(t_{n}) + \Delta Z_n \right) + \Delta_N \left( \kappa_1\psi(t_{n+1}) + \kappa_2\varphi(t_{n+1}) \right)}{1 + \Delta_N \kappa_3}, \\
        B_{1,n} &:= \varphi(t_{n+1}) \psi(t_{n+1}) + \frac{(\varphi(t_{n+1}) + \psi(t_{n+1}))(\widehat Y(t_{n}) + \Delta Z_n) - \Delta_N (\kappa_1 + \kappa_2)}{1 + \Delta_N \kappa_3},
        \\
        B_{2,n} &:= -\varphi(t_{n+1}) - \psi(t_{n+1}) - \frac{\widehat Y(t_{n}) + \Delta Z_n}{1 + \Delta_N \kappa_3},
    \end{align*}
    Note this equation can be solved explicitly using, e.g., the celebrated Cardano method. Namely, define
    \[
        Q_{1,n} := B_{1,n} - \frac{B_{2,n}^2}{3}, \quad Q_{2,n} := \frac{2B_{2,n}}{27} - \frac{B_{2,n}B_{1,n}}{3} + B_{0,n} 
    \]
    and put
    \[
        Q_n = \left( \frac{Q_{1,n}}{3} \right)^3 + \left( \frac{Q_{2,n}}{2} \right)^2,
    \]
    \[
        \alpha_n := \sqrt[3]{- \frac{Q_{2,n}}{2} + \sqrt{Q_n}}, \quad \beta_n := \sqrt[3]{- \frac{Q_{2,n}}{2} - \sqrt{Q_n}},
    \]
    where among possible complex values of $\alpha_n$ and $\beta_n$ one should take those for which $\alpha_n\beta_n = -\frac{Q_{1,n}}{3}$. Then the three roots of the cubic equation \eqref{eq: cubic equation for simulation} are
    \begin{align*}
        y_{1,n} &= \alpha_n + \beta_n,
        \\
        y_{2,n} &= -\frac{\alpha_n + \beta_n}{2} + i\frac{\alpha_n - \beta_n}{2}\sqrt{3},
        \\
        y_{3,n} &= -\frac{\alpha_n + \beta_n}{2} - i\frac{\alpha_n - \beta_n}{2}\sqrt{3},
    \end{align*}
    and $\widehat Y(t_{n+1})$ is equal to the root which belongs to $(\varphi(t_{n+1}), \psi(t_{n+1}))$ (note that there is exactly one root in that interval).
    
    Fig. \ref{fig2} contains 10 sample paths of the process \eqref{eq: TSB sim} driven by a fractional Brownian motion with $H=0.7$. In all simulation we take $N = 10000$, $T=1$ and $Y(0) = 0$, $\kappa_1 = \kappa_2 =\frac{1}{2}$, $\kappa_3 = 0$ (this case corresponds to the TSB equation described in Example \ref{ex: TSB}). Simulation is performed by direct implementation of the Cardano's method in \textsf{R}; based on 10000 simulations, the average time for simulating one path is 0.03700142 seconds.
    
    \begin{figure}[h!]
        \centering
        \includegraphics[width = 0.7\textwidth]{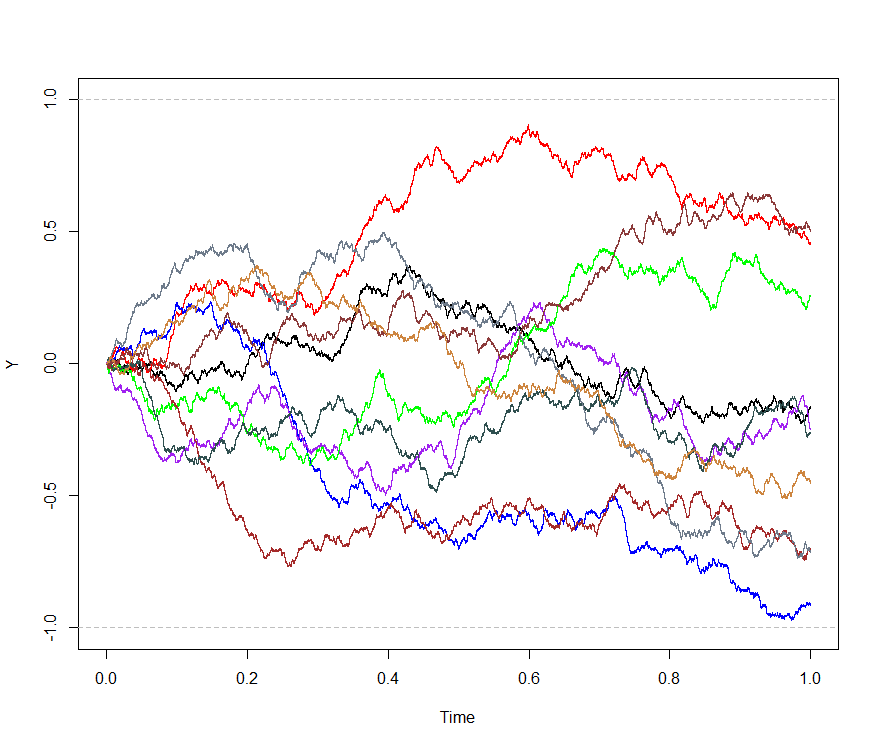}
        \caption{Ten sample paths of \eqref{eq: TSB sim} generated using the backward Euler approximation scheme; $N = 10000$, $T=1$, $Y(0) = 0$, $\kappa_1 = \kappa_2 =\frac{1}{2}$, $\kappa_3 = 0$, $Z$ is a fractional Brownian motion with $H = 0.7$.}
        \label{fig2}
    \end{figure}
    
\end{example}

In both Examples \ref{ex: CIR sim} and \ref{ex: TSB sim}, equations for computing $\widehat Y$ could be explicitly solved but the H\"older continuity of the noise could not be less then $1/2$. The next example shows that the drift-implicit Euler scheme can be applied in the rough case as well.

\begin{example}\label{ex: sandwich mBm sim}{\textit{(Sandwiched process driven by multifractional Brownian motion)}}
    Consider the sandwiched SDE of the form
    \begin{equation}\label{eq: sandwich mBm}
        Y(t) = Y(0) + \int_0^t \left(\frac{\kappa_1}{(Y(s) - \varphi(s))^4} - \frac{\kappa_2}{(\psi(s) - Y(s))^4}\right)ds + Z(t), \quad t\in[0,T].
    \end{equation}
    In this case, Theorem \ref{th: properties of sandwich} guarantees existence and uniqueness of the solution for $\lambda$-H\"older $Z$ with $\lambda > \frac{1}{5}$ (note that this equation fits the framework of Example \ref{ex: generalized TSB} from Section \ref{sec: assumptions}). On Fig. 3, one can see paths of the process \eqref{eq: sandwich mBm} with $\kappa_1 = \kappa_2 = 1$, $\varphi(t) = \sin(10t)$, $\psi(t) = \sin(10t) + 2$ driven by multifractional Brownian motion (mBm) with functional Hurst parameter $H(t) = \frac{1}{5} \sin(2\pi t) + \frac{1}{2}$ (note that the lowest value of the functional Hurst parameter is $H\left(\frac{3}{4}\right) = 0.3$). For more details on mBm, see \cite{ACLV2002} as well as \cite[Lemma 3.1]{DKMR2018} for results on H\"older continuity of its paths. Based on 10000 simulations, the average time for simulating one path is 0.4968714 seconds.
    \begin{figure}
        \centering
        \includegraphics[width = 0.7\textwidth]{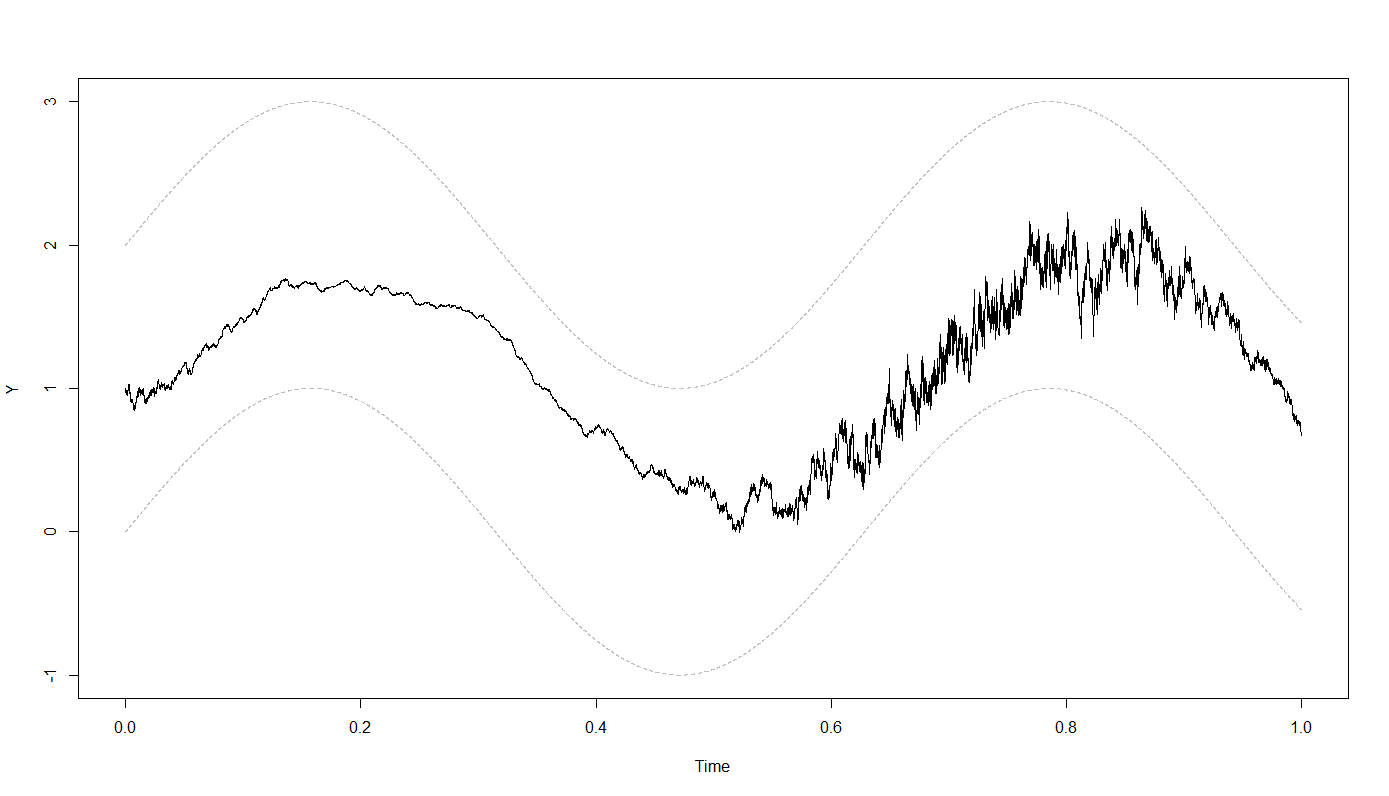}
        \caption{A sample path of \eqref{eq: sandwich mBm} generated using the backward Euler approximation scheme; $N = 10000$, $T=1$, $Y(0) = 1$, $\kappa_1 = \kappa_2 = 1$, $\varphi(t) = \sin(10t)$, $\psi(t) = \sin(10t) + 2$, $Z$ is a multifractional Brownian motion with functional Hurst parameter $H(t) = \frac{1}{5} \sin(2\pi t) + \frac{1}{2}$.}
        \label{fig3}
    \end{figure}
    
\end{example}

\section*{Acknowledgements}

The present research is carried out within the frame and support of the ToppForsk project nr. 274410 of the Research Council of Norway with title STORM: Stochastics for Time-Space Risk Models. The second author was supported by Japan Science and Technology Agency CREST  JPMJCR21. 

\bibliographystyle{acm}
\bibliography{biblio.bib}

\end{document}